\definecolor{darkgreen}{rgb}{.1,.5,0}
\theoremstyle{plain}
\newtheorem{theorem}{Theorem}[section]
\newtheorem{lemma}[theorem]{Lemma}
\newtheorem{definition}[theorem]{Definition}
\newtheorem{problem}[theorem]{Problem}
\theoremstyle{definition}
\newtheorem{observation}[theorem]{Observation}
\newtheorem{remark}[theorem]{Remark}
\numberwithin{equation}{section} \setlength{\textwidth}{6.3in}
\def\({\left(}
\def\){\right)}
\def\Z{\mathbb{Z}_+^2}
\def\W{W_{(\alpha,\beta)}}
\def\lk{\mathcal{L}_{\mathbf{k}}}
\def\bk{\mathbf{k}}
\def\shift{\operatorname{shift}}
\def\max{\operatorname{max}}
\def\stc{\operatorname{\mathcal{S}}}
\def\fnd{\operatorname{\mathcal{F}}}
\newcommand\ddfrac[2]{\frac{\displaystyle #1}{\displaystyle #2}}
\begin{document}
\title[Reconstruction-of-the-Measure Problem for Canonical Invariant Subspaces]{%
Solution of the Reconstruction-of-the-Measure Problem \\ for Canonical Invariant Subspaces}
\author{Ra\'{u}l E. Curto}
\address{Department of Mathematics, The University of Iowa, Iowa City, Iowa
52242}
\email{raul-curto@uiowa.edu}
\urladdr{http://www.math.uiowa.edu/\symbol{126}rcurto/}
\author{Sang Hoon Lee}
\address{Department of Mathematics, Chungnam National University, Daejeon,
34134, Republic of Korea}
\email{slee@cnu.ac.kr}
\urladdr{}
\author{Jasang Yoon}
\address{School of Mathematical and Statistical Sciences, The University of
Texas Rio Grande Valley, Edinburg, Texas 78539, USA}
\email{jasang.yoon@utrgv.edu}

\begin{abstract}
We study the Reconstruction-of-the-Measure Problem (ROMP) for commuting $2$-variable weighted shifts $\W$, when the initial data are given as the Berger measure of the restriction of $\W$ to a canonical invariant subspace, together with the marginal measures for the $0$--th row and $0$--th column in the weight diagram for $\W$. \ We prove that the natural necessary conditions are indeed sufficient. \ When the initial data correspond to a soluble problem, we give a concrete formula for the Berger measure of $\W$. \ Our strategy is to build on previous results for back-step extensions and one-step extensions. \ A key new theorem allows us to solve ROMP for two-step extensions. \ This, in turn, leads to a solution of ROMP for arbitrary canonical invariant subspaces of $\ell^2(\Z)$. 
\end{abstract}

\thanks{The second author of this paper was partially supported by NRF
(Korea) grant No. 2020R1A2C1A0100584611.}
\thanks{The third named author was partially supported by a grant from the
University of Texas System and the Consejo Nacional de Ciencia y Tecnolog%
\'{\i}a de M\'{e}xico (CONACYT)}
\keywords{two-step extension, $2$-variable weighted shifts, subnormal pair,
Berger measure, canonical invariant subspace}
\subjclass[2010]{Primary 47B20, 47B37, 47A13, 28A50; Secondary 44A60, 47-04,
47A20}
\maketitle
\tableofcontents

\setcounter{tocdepth}{2}

\section{\label{Int}Introduction}

Let $\mathcal{H}$ be a complex Hilbert space and let $\mathcal{B}(\mathcal{H}%
)$ denote the algebra of bounded linear operators on $\mathcal{H}$. \ We say
that $T\in \mathcal{B}(\mathcal{H})$ is \emph{normal} if $T^{\ast
}T=TT^{\ast }$, \emph{subnormal} if $T=N |_{\mathcal{H}}$, where $N$ is
normal and $N(\mathcal{H})\subseteq \mathcal{H}$, and hyponormal if $T^{\ast }T \ge TT^{\ast }$. \ These notions extend to commuting $n$-tuples of Hilbert space operators $\mathbf{T} \equiv (T_1,\ldots,T_n)$; for instance, $\mathbf{T}$ is normal if $T_i^{\ast }T_j=T_jT_i^{\ast }$ for all $i,j=1,\ldots,n$, and $\mathbf{T}$ is subnormal if $\mathbf{T} = \mathbf{N} |_{\mathcal{H}}$, where $\mathbf{N}$ is normal on a larger Hilbert space $\mathcal{K}$ and $N_i\mathcal{H} \subseteq \mathcal{H}$ for all $i=1,\ldots,n$. \ 

In this paper we will focus on the case of $\mathcal{H}=\ell^2(\Z)$ and $\mathbf{T}$ a $2$-variable weighted shift $\W \equiv(T_1,T_2)$. \ As is well known, the subnormality of such pairs is characterized by the existence of a representing measure (known as the Berger measure of $\W$) for the family of moments of the pair of double-indexed sequences $(\alpha,\beta)$. \ We will solve the so-called Reconstruction-of-the-Measure Problem (ROMP) when $\W$ admits partial Berger measures; more specifically, when the restriction of $\W$ to a canonical invariant subspace of $\ell^2(\Z)$ is subnormal. \ That restriction generates a finite family $\mathcal{F}$ of (localized) Berger measures $\mu_1,\ldots,\mu_p$ which satisfy natural compatibility conditions. \ Two other $1$-variable Berger measures, $\sigma$ and $\tau$ (corresponding to the $0$--th row and $0$--th column in the weight diagram for $\W$), are assumed to be part of the initial data. \ ROMP asks for necessary and sufficient conditions on the members of $\mathcal{F}$ for the subnormality of $\W$. \ When soluble, one must also reconstruct the Berger measure of $\W$ from $\mathcal{F}$ and the marginal measures $\sigma$ and $\tau$. \ Our main result (Theorem \ref{canonicalthm}) gives a complete solution of ROMP in the case of canonical invariant subspaces. 

\medskip
Besides their relevance for the construction of examples and counterexamples
in Hilbert space operator theory, weighted shifts can also be used to detect
properties such as subnormality, via the Lambert-Lubin Criterion (\cite{Lamb}%
, \cite{Lub}): A commuting pair $(T_{1},T_{2})$ of injective operators
acting on a Hilbert space $\mathcal{H}$ admits a commuting normal extension
if and only if for every nonzero vector $x \in \mathcal{H}$, the $2$-variable
weighted shift with weights $\alpha_{(i,j)} :=\frac{\left\|T_1^{i+1}T_2^jx
\right\|}{\left\|T_1^iT_2^jx\right\|}$ and $\beta_{(i,j)} :=\frac{%
\left\|T_1^iT_2^{j+1}x\right\|}{\left\|T_1^iT_2^jx\right\|}$ has a normal
extension.

For $\omega \equiv \{\omega _{n}\}_{n=0}^{\infty }\in \ell ^{\infty }(%
\mathbb{Z}_{+})$ a bounded sequence of positive real numbers (called \emph{weights}), let $W_{\omega }:\ell ^{2}(\mathbb{Z}_{+})\rightarrow \ell ^{2}(%
\mathbb{Z}_{+})$ be the associated unilateral weighted shift, defined by $%
W_{\omega }e_{n}:=\omega _{n}e_{n+1}\;($all $n\geq 0)$, where $%
\{e_{n}\}_{n=0}^{\infty }$ is the canonical orthonormal basis in $\ell ^{2}(%
\mathbb{Z}_{+}).$ \ The \emph{moments} of $\omega $ are given as
\begin{equation*}
\gamma _{k}\equiv \gamma _{k}(\omega ):=\left\{
\begin{tabular}{ll}
$1$, & $\textrm{if }k=0$ \\
$\omega _{0}^{2}\cdots \omega _{k-1}^{2}$, & $\text{if }k>0.$%
\end{tabular}%
\right.
\end{equation*}%
It is easy to see that $W_{\omega }\equiv \shift (\omega _{0},\omega
_{1},\cdots )$ is never normal, and that it is hyponormal if and only if $%
\omega _{0}\leq \omega _{1}\leq \cdots $. \ We recall a well known
characterization of subnormality for single variable weighted shifts, due to
C. Berger (cf. \cite[II.6.10]{Con}) and independently established
by R. Gellar and L.J. Wallen \cite{GeWa}: \ $W_{\omega }$ is subnormal if and
only if there exists a probability measure $\sigma $ supported in $%
[0,\left\Vert W_{\omega }\right\Vert ^{2}]$ (called the \emph{Berger
measure }of $W_{\omega }$) such that for $k\geq 1$
\begin{equation}
\gamma _{k}(\omega )\equiv \gamma _{k}(\sigma )=\int t^{k}d\sigma (t).
\label{measure}
\end{equation}

Similarly, consider double-indexed positive bounded sequences $\alpha \equiv
\{\alpha _{\left( k_{1},k_{2}\right) }\},\beta \equiv \{\beta _{\left(
k_{1},k_{2}\right) }\}\in \ell ^{\infty }(\mathbb{Z}_{+}^{2})$, $%
(k_{1},k_{2})\in \mathbb{Z}_{+}^{2}:=\mathbb{Z}_{+}\times \mathbb{Z}_{+}$
and let $\ell ^{2}(\mathbb{Z}_{+}^{2})$\ be the Hilbert space of
square-summable complex sequences indexed by $\mathbb{Z}_{+}^{2}$; in a
canonical way, $\ell ^{2}(\mathbb{Z}_{+}^{2})$ is isometrically isomorphic
to $\ell ^{2}(\mathbb{Z}_{+})\bigotimes \ell ^{2}(\mathbb{Z}_{+})$. \ We
define the $2$-variable weighted shift $\W \equiv (T_{1},T_{2})$ by
\begin{equation*}
T_{1}e_{\left( k_{1},k_{2}\right) }:=\alpha _{\left( k_{1},k_{2}\right)
}e_{\left( k_{1},k_{2}\right) \mathbf{+}\varepsilon _{1}}
\end{equation*}%
\begin{equation*}
T_{2}e_{\left( k_{1},k_{2}\right) }:=\beta _{\left( k_{1},k_{2}\right)
}e_{\left( k_{1},k_{2}\right) \mathbf{+}\varepsilon _{2}},
\end{equation*}%
where $\mathbf{\varepsilon }_{1}:=(1,0)$ and $\mathbf{\varepsilon }%
_{2}:=(0,1)$ (see Figure \ref{Figure 1}). \ Clearly, $T_{1}$ commutes with $%
T_{2}$ if and only if
\begin{equation}
\beta _{\left( k_{1},k_{2}\right) \mathbf{+}\varepsilon _{1}}\alpha _{\left(
k_{1},k_{2}\right) }=\alpha _{\left( k_{1},k_{2}\right) \mathbf{+}%
\varepsilon _{2}}\beta _{\left( k_{1},k_{2}\right) }\;\;\left( \textrm{all }%
\left( k_{1},k_{2}\right) \in \mathbb{Z}_{+}^{2}\right) .  \label{commuting}
\end{equation}%
In an entirely similar way one can define multivariable weighted shifts.


\setlength{\unitlength}{1mm} \psset{unit=1mm}
\begin{figure}[th]
\begin{center}
\begin{picture}(155,65)

\rput(0,23){

\psline{->}(20,20)(69,20)
\psline(20,40)(66,40)
\psline(20,60)(66,60)
\psline{->}(20,20)(20,70)
\psline(40,20)(40,67)
\psline(60,20)(60,67)

\put(12,16){\footnotesize{$(0,0)$}}
\put(37,16){\footnotesize{$(1,0)$}}
\put(57,16){\footnotesize{$(2,0)$}}

\put(28,21){\footnotesize{$\alpha_{00}$}}
\put(48,21){\footnotesize{$\alpha_{10}$}}
\put(61,21){\footnotesize{$\alpha_{20}$}}

\put(28,41){\footnotesize{$\alpha_{01}$}}
\put(48,41){\footnotesize{$\alpha_{11}$}}
\put(61,41){\footnotesize{$\alpha_{21}$}}

\put(28,61){\footnotesize{$\alpha_{02}$}}
\put(48,61){\footnotesize{$\alpha_{12}$}}
\put(61,61){\footnotesize{$\alpha_{22}$}}

\psline{->}(35,14)(50,14)
\put(42,10){$\rm{T}_1$}
\psline{->}(10,35)(10,50)
\put(4,42){$\rm{T}_2$}

\put(11,40){\footnotesize{$(0,1)$}}
\put(11,60){\footnotesize{$(0,2)$}}

\put(20,30){\footnotesize{$\beta_{00}$}}
\put(20,50){\footnotesize{$\beta_{01}$}}
\put(20,65){\footnotesize{$\beta_{02}$}}

\put(40,30){\footnotesize{$\beta_{10}$}}
\put(40,50){\footnotesize{$\beta_{11}$}}
\put(40,65){\footnotesize{$\beta_{12}$}}

\put(60,30){\footnotesize{$\beta_{20}$}}
\put(60,50){\footnotesize{$\beta_{21}$}}

}


\rput(82,30){

\psline{->}(0,0)(60,0)
\psline(0,10)(60,10)
\psline(0,20)(60,20)
\psline(0,30)(60,30)
\psline(0,40)(60,40)
\psline(0,0)(0,40)
\psline(10,0)(10,40)
\psline(20,0)(20,40)
\psline(30,0)(30,40)
\psline(40,0)(40,40)
\psline(50,0)(50,40)
\psline(60,0)(60,40)

\psline[linecolor=blue, linewidth=2.4pt](0,0)(10,0)
\psline[linecolor=blue, linewidth=2.4pt](10,0)(10,20)
\psline[linecolor=blue, linewidth=2.4pt](10,20)(40,20)
\psline[linecolor=blue, linewidth=2.4pt](40,20)(40,30)
\psline[linecolor=blue, linewidth=2.4pt](40,30)(50,30)
\psline[linecolor=blue, linewidth=2.4pt](50,30)(50,40)
\psline[linecolor=blue, linewidth=2.4pt](50,40)(60,40)

\put(56,43){\footnotesize{$(k_{1},k_{2})$}}
\put(-4,-4){\footnotesize{$(0,0)$}}

}

\end{picture}
\end{center}
\caption{(left) \ Weight diagram of a generic $2$-variable weighted shift $\W \equiv (T_{1},T_{2})$; (right) \ nondecreasing path used to compute the moment $\gamma_{(k_1,k_2)}$ .}
\label{Figure 1}
\end{figure}


For a commuting $2$-variable weighted shift $\W$, and for $k_1,k_2 \mathbb{Z}_+$, the \emph{moment} of $(\alpha ,\beta )$ of order $\left( k_{1},k_{2}\right) $ is defined by
\begin{equation}
\gamma _{\left( k_{1},k_{2}\right) }(\alpha ,\beta )\smallskip :=\left\{
\begin{tabular}{ll}
$1$, & $\textrm{if }\left( k_{1},k_{2}\right) =\left( 0,0\right) $ \\
$\alpha _{(0,0)}^{2}\cdots \alpha _{(k_{1}-1,0)}^{2}$, & $\textrm{if }%
k_{1}\geq 1\textrm{ and }k_{2}=0$ \\
$\beta _{(0,0)}^{2}\cdots \beta _{(0,k_{2}-1)}^{2}$, & $\textrm{if }k_{1}=0%
\textrm{ and }k_{2}\geq 1$ \\
$\alpha _{(0,0)}^{2}\cdots \alpha _{(k_{1}-1,0)}^{2}\cdot \beta
_{(k_{1},0)}^{2}\cdots \beta _{(k_{1},k_{2}-1)}^{2}$, & $\textrm{if }k_{1}\geq
1\textrm{ and }k_{2}\geq 1$.%
\end{tabular}
\right. 
\label{gamma}
\end{equation}%
We remark that, due to the commutativity condition (\ref{commuting}), $%
\gamma _{\left( k_{1},k_{2}\right) }\equiv \gamma _{\left(
k_{1},k_{2}\right) }(\alpha ,\beta )$ can be computed using any
nondecreasing path from $(0,0)$ to $(k_{1},k_{2})$ (see Figure \ref{Figure 1}(right)).

We also recall a well known characterization of subnormality for
multivariable weighted shifts $\mathbf{T\equiv (}T_{1},\cdots ,T_{n})$ \cite%
{JeLu}; for
simplicity, we state it in the case $n=2$: $\W \equiv
(T_{1},T_{2})$ is subnormal if and only if there is a probability measure $%
\mu $ defined on the rectangle $R=[0,a_{1}]\times \lbrack 0,a_{2}]$, where $%
a_{i}:=\left\Vert T_{i}\right\Vert ^{2}$, such that%
\begin{equation*}
\gamma _{\left( k_{1},k_{2}\right) }=\int_{R}s^{k_{1}}t^{k_{2}}d\mu (s,t),
\end{equation*}%
for all $\left( k_{1},k_{2}\right) $ ${\mathbf{\in \mathbb{Z}}_{+}^{2}}$.

\medskip


\section{Statement of Main Results}

For $\bk \in \Z$, let $\lk:= \bigvee \{e_{\bk + \mathbf{p}}: \mathbf{p} \in \Z \} \subseteq \ell^2(\Z)$. \ Of special significance in this paper is the closed subspace $\mathcal{L}_{(k,\ell)}:= \mathcal{L}_{(0,\ell)} \bigcap \mathcal{L}_{(k,0)}$; a visual representation of $\mathcal{L}_{(k,\ell)}$ appears in Figure \ref{Figure 2}. \ \ A closed subspace $\mathcal{R}$ of $\ell^2(\Z)$ is called canonical if $\mathcal{R}=\bigvee_{\{\bk \in \Z: e_{\bk} \in \mathcal{R}\}} \mathcal{L}_{\bk}$.

\setlength{\unitlength}{1mm} \psset{unit=1mm} 
\begin{figure}[ht]
\begin{center}
\begin{picture}(105,80)

\rput(0,25){

\pspolygon*[linecolor=lightgreen](40,40)(40,83)(83,83)(83,40)
\pspolygon*[linecolor=lightyellow](40,20)(40,40)(83,40)(83,20)
\pspolygon*[linecolor=lightblue](20,40)(40,40)(40,83)(20,83)

\psline{->}(20,20)(88,20)
\psline(20,40)(85,40)
\psline(20,60)(85,60)
\psline(20,80)(85,80)
\psline{->}(20,20)(20,88)
\psline(40,20)(40,85)
\psline(60,20)(60,85)
\psline(80,20)(80,85)

\put(11,16){\footnotesize{$(0,0)$}}
\put(35,16){\footnotesize{$(k,0)$}}
\put(55,16){\footnotesize{$(k+1,0)$}}
\put(78,16){\footnotesize{$(k+2,0)$}}

\put(27,21){\footnotesize{$\cdots$}}
\put(87,21){\footnotesize{$\cdots$}}
\put(27,81){\footnotesize{$\cdots$}}

\put(11,38){\footnotesize{$(0,\ell)$}}
\put(7,58){\footnotesize{$(0,\ell+1)$}}
\put(7,78){\footnotesize{$(0,\ell+2)$}}

\put(21.5,28){\footnotesize{$\vdots$}}
\put(21.5,84){\footnotesize{$\vdots$}}
\put(82,28){\footnotesize{$\vdots$}}

\put(65,34){\footnotesize{$\mathcal{L}_{(k,0)}$}}
\put(30,68){\footnotesize{$\mathcal{L}_{(0,\ell)}$}}
\put(49,62){\footnotesize{{{\color{blue} $\mathcal{L}_{(0,\ell)}\bigcap \mathcal{L}_{(k,0)}$}}}}

}

\end{picture}
\end{center}
\caption{The subspaces $\mathcal{L}_{(0,\ell)}$, $\mathcal{L}_{(k,0)}$ and {\color{blue} $\mathcal{L}_{(k,\ell)}:=\mathcal L_{(0,\ell)} \bigcap \mathcal{L}_{(k,0)}$}} 
\label{Figure 2}
\end{figure}
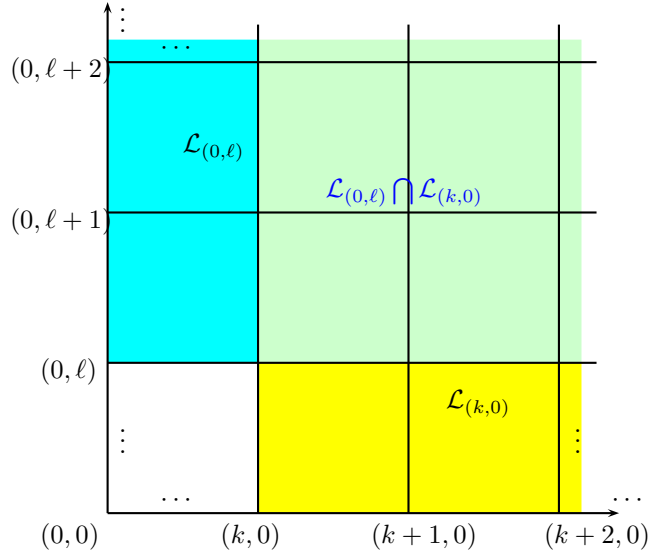

Our first main result is Theorem \ref{thmgeneralized}, which extends the main result in \cite{CLY5} to the case of arbitrary subspaces $\mathcal{L}_{(0,\ell)}$ and $\mathcal{L}_{(k,0)}$. \ That is, given Berger measures $\mu_{(0,\ell)}$ and $\mu_{(k,0)}$ satisfying the compatibility condition
\begin{equation} \label{compatnew}
\gamma_{(0,\ell)} s^k d \mu_{(0,\ell)}(s,t) = \gamma_{(k,0)} t^{\ell} d \mu_{(k,0)}(s,t),
\end{equation}
then the subnormality of $\W$ is guaranteed once the natural necessary conditions given below hold:
\begin{itemize}
\item[(i)] \ $\frac{1}{t^{\ell}} \in L^1(\mu_{(0,\ell)})$;
\item[(ii)] \ $\frac{1}{s^{k}} \in L^1(\mu_{(k,0)})$;
\item[(iii)] \ $\gamma_{(k,0)} \left\|\frac{1}{s^k}\right\|_{\mu_{(k,0)}} = \lambda \gamma_{(0,\ell)} \left\|\frac{1}{s^k}\right\|_{\mu_{(k,0)}} \le 1$; and
\item[(iv)] \ $\gamma_{(0,\ell)} \{ \int_Y \frac{d \; \mu_{(0,\ell)}(s,t)}{t^{\ell}}+\lambda \left\|\frac{1}{s^k}\right\|_{\mu_{(k,0)}} d \delta_0(s)-\frac{\lambda}{s^k} \int_Y d\mu_{(k,0)}(s,t) \} \le d \delta_0(s)$.
\end{itemize}

Next, let $\W$ be a $2$-variable weighted shift such that (i) $\W |_{\mathcal{L}_{(k,\ell)}}$ is subnormal with $2$--variable Berger measure $\nu$, (ii) $\W$ restricted to the $0$--th row is subnormal with $1$--variable Berger measure $\sigma$, and (iii) $\W$ restricted to the $0$--column is subnormal with $1$--variable Berger measure $\tau$. \ 

Our second main result is Theorem \ref{bigthm}: given the initial data $\nu$, $\sigma$ and $\tau$, the natural necessary conditions for the subnormality of $\W$ are also sufficient. \ The proof uses a new idea together with appropriate generalizations of results in \cite{CuYo1,CuYo2,ROMP,CLY5}. \ 

Our third main result provides a complete solution of the reconstruction-of-the-measure problem for canonical invariant subspaces; we do this in Section \ref{generalROMP}. \ Briefly stated, given a finite family of Berger measures $\nu_{\bk}$ associated with a canonical invariant subspace $\mathcal{S}$, satisfying natural compatibility conditions of the type described in (\ref{compatnew}), and given $1$-variable marginal Berger measures $\sigma$ and $\tau$, the solubility of ROMP for $\W$ is fully determined by the solubility of the localized version of ROMP on the pair of subspaces $\mathcal{L}_{\mathbf{P}}$ and $\mathcal{L}_{\mathbf{q}}$, where $\mathbf{p}$ and $\mathbf{q}$ are two arbitrary lattice points in the so-called foundation set of the subspace $\mathcal{S}$. \ Once again, the natural necessary conditions for the solubility of ROMP are sufficient.   


\section{Notation and Preliminaries}

For the reader's convenience, in this section, we gather several well known
auxiliary results which are needed for the proofs of the main results in
this article. \ We recall some auxiliary facts needed for the proof of our
main results. \ In the single variable case, if $W_{\omega }$ is subnormal
with Berger measure $\sigma $, and if we let $\mathcal{L}_{j}:=\bigvee
\{e_{n}: n\geq j\}$ denote the invariant subspace obtained by
removing the first $j$ vectors in the canonical orthonormal basis of $\ell
^{2}(\mathbb{Z}_{+})$, then the Berger measure of $W_{\omega }|_{\mathcal{L}%
_{j}}$ is
\begin{equation}
d\sigma _{j}\left( s\right) =\frac{s^{j}}{\gamma _{j}}d\sigma(s) \quad (j=1,2, \ldots).   \label{measure1}
\end{equation}%

\begin{lemma}
\label{backward}(Subnormal backward extension of a $1$-variable weighted
shift) (cf. \cite[Proposition 8]{QHWS}, \cite[Proposition 1.5]{CuYo1}) \ Let $%
W_{\omega }|_{\mathcal{L}_{1}}$ be subnormal, with Berger measure $\mu _{%
\mathcal{L}_{1}}$. \ Then $W_{\omega }$ is subnormal (with Berger measure $%
\mu $) if and only if the following conditions hold:\newline
(i) $\ \frac{1}{s}\in L^{1}(\mu _{\mathcal{L}_{1}})\smallskip $\newline
(ii) $\ \omega _{0}^{2}\leq \left( \left\Vert \frac{1}{s}\right\Vert
_{L^{1}\left( \mu _{\mathcal{L}_{1}}\right) }\right) ^{-1}.$\newline
In this case, $d\mu (s)=\frac{\omega _{0}^{2}}{s}d\mu _{\mathcal{L}%
_{1}}(s)+\left( 1-\omega _{0}^{2}\left\Vert \frac{1}{s}\right\Vert
_{L^{1}\left( \mu _{\mathcal{L}_{1}}\right) }\right) d\delta _{0}(s)$, where
$\delta _{0}$ denotes the point-mass probability measure with support the
singleton set $\left\{ 0\right\} $. \ In particular, $W_{\omega }$ is never
subnormal when $\mu _{\mathcal{L}_{1}}(\{0\})>0$.
\end{lemma}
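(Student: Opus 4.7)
The plan is to extract the explicit formula for $\mu$ directly from the restriction identity (\ref{measure1}), applied with $j=1$, which reads $d\mu_{\mathcal{L}_1}(s) = \frac{s}{\omega_0^2} d\mu(s)$ whenever $W_{\omega}$ is subnormal with Berger measure $\mu$. Both directions of the equivalence will flow from this single relation.

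For necessity, I would assume $W_{\omega}$ is subnormal with Berger measure $\mu$, rearrange the identity above into $\frac{1}{s} d\mu_{\mathcal{L}_1}(s) = \frac{1}{\omega_0^2} d\mu(s)$ on $(0,+\infty)$, and integrate to obtain $\left\|\frac{1}{s}\right\|_{L^1(\mu_{\mathcal{L}_1})} = \omega_0^{-2}(1 - \mu(\{0\})) \le \omega_0^{-2}$, which simultaneously gives (i) and (ii). Reading the same relation as a formula for $d\mu$ on $(0,+\infty)$ and adding the point mass $\mu(\{0\})\delta_0$ at the origin recovers the stated formula, with $\mu(\{0\}) = 1 - \omega_0^2 \left\|\frac{1}{s}\right\|_{L^1(\mu_{\mathcal{L}_1})}$.

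For sufficiency, I would take the stated formula as the \emph{definition} of $\mu$ and verify three things: nonnegativity of the point mass coefficient (this is exactly (ii)); total mass one (a direct computation using (i)); and the moment matching $\gamma_k(\omega) = \int s^k d\mu(s)$ for every $k \ge 0$. The case $k=0$ is the probability condition; for $k \ge 1$ the $\delta_0$ contribution vanishes, leaving $\omega_0^2 \int s^{k-1} d\mu_{\mathcal{L}_1}(s) = \omega_0^2 \cdot \omega_1^2 \cdots \omega_{k-1}^2 = \gamma_k(\omega)$, where the first equality uses that $\mu_{\mathcal{L}_1}$ is the Berger measure of the shifted weighted shift with weights $(\omega_1,\omega_2,\ldots)$. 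By Berger's theorem (\ref{measure}), $W_{\omega}$ is then subnormal with Berger measure $\mu$.

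No serious obstacle is anticipated; the only delicate point is the atom at the origin, which is \emph{invisible} to the relation $d\mu_{\mathcal{L}_1} = \frac{s}{\omega_0^2} d\mu$ and therefore must be reinstated by hand in the reverse direction. Condition (ii) is precisely what ensures this can be done with nonnegative mass. The closing assertion is then immediate: if $\mu_{\mathcal{L}_1}(\{0\}) > 0$ then $\int \frac{d\mu_{\mathcal{L}_1}(s)}{s} = +\infty$, so (i) fails and $W_{\omega}$ cannot be subnormal.
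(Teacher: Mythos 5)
Your argument is correct and is essentially the standard one: the paper itself gives no proof of this lemma (it is quoted from \cite[Proposition 8]{QHWS} and \cite[Proposition 1.5]{CuYo1}), and your derivation via the restriction identity (\ref{measure1}) with $j=1$, together with the moment-matching verification and the determinacy of compactly supported measures by their moments, is precisely the argument used in those references. The one point worth making explicit is that in the necessity direction you are invoking uniqueness of the Berger measure to identify $\mu_{\mathcal{L}_1}$ with $\frac{s}{\omega_0^2}\,d\mu(s)$, but that is implicit in (\ref{measure1}) as stated in the paper.
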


To check the subnormality of $2$-variable weighted shifts, we need to
introduce some definitions.

\begin{definition}
(\cite{CuYo1}, \cite{CuYo2}, \cite{Yo}) \ (i) \ Let $\mu $ and $\nu $ be two
positive measures on $X\equiv \mathbb{R}_{+}.$ \ We say that $\mu \leq \nu $
on $X,$ if $\mu (E)\leq \nu (E)$ for all Borel subset $E\subseteq X$;
equivalently, $\mu \leq \nu $ if and only if $\int fd\mu \leq \int fd\nu $
for all $f\in C(X)$ such that $f\geq 0$ on $X$.$\smallskip $\newline
(ii)\ \ Let $\mu $ be a probability measure on $X\times Y,$ with $Y\equiv
\mathbb{R}_{+}$, and assume that $\frac{1}{s}\in L^{1}(\mu )$ (resp $\frac{1%
}{t}\in L^{1}(\mu )$). \ The \emph{extremal measure} $\mu _{ext;s}$ (resp. $\mu _{ext;t}$) (which is also a
probability measure) on $X\times Y$ is given by
\begin{equation*}
\begin{tabular}{l}
$d\mu _{ext;s}(s,t):=\frac{1}{s\left\Vert \frac{1}{s}%
\right\Vert _{L^{1}(\mu )}}d\mu (s,t)$ $\quad$ (resp. $d\mu _{ext;t}(s,t):=\frac{1}{t\left\Vert \frac{1}{t}\right\Vert _{L^{1}(\mu )}}d\mu
(s,t) $).$\smallskip $%
\end{tabular}%
\end{equation*}
\newline
(iii) \ Given a measure $\mu $ on $X\times Y,$ the \emph{marginal measure}
$\mu ^{X}$ is given by $\mu ^{X}:=\mu \circ \pi _{X}^{-1}$, where $\pi
_{X}:X\times Y\rightarrow X$ is the canonical projection onto $X$. \ Thus $%
\mu ^{X}(E)=\mu (E\times Y)$, for every $E\subseteq X$; equivalently, $d\mu
^{X}(s)=\int_{Y}d\mu (s,t)$. \ (Observe that $\mu ^{X}$ is a probability
measure whenever $\mu $ is.)
\end{definition}

The following result is a very special case of the Reconstruction-of-the-Measure Problem.

\begin{lemma}
\label{backext} (Subnormal backward extension of a $2$-variable weighted
shift \cite{CuYo1}) \ Consider the $2$-variable weighted shift whose weight
diagram is given in Figure 1(i). \ Assume that $\mathbf{T}|_{\mathcal{L}_{(0,1)}}$
is subnormal, with associated measure $\mu_{(0,1)}$, and that $W_{0}\equiv
shift(\alpha _{00},\alpha _{10},\cdots )$ is subnormal with associated
measure $\sigma$. \ Then $\mathbf{T}$ is subnormal if and only if\newline
(i) $\frac{1}{t}\in L^{1}(\mu_{(0,1)})$;$\smallskip $\newline
(ii) $\beta _{00}^{2}\leq (\left\Vert \frac{1}{t}\right\Vert _{L^{1}(\mu_{(0,1)}
)})^{-1}$;$\smallskip $\newline
(iii) $\beta _{00}^{2}\left\Vert \frac{1}{t}\right\Vert _{L^{1}(\mu_{(0,1)})}((\mu_{(0,1)})_{ext;t})^{X}\leq \sigma$.

Moreover, if $\beta _{00}^{2}\left\Vert \frac{1}{t}\right\Vert _{L^{1}(\mu_{(0,1)})}=1$ then $((\mu_{(0,1)})_{ext;t})^{X}=\sigma$. \ In the case when $\mathbf{T}$ is
subnormal, the Berger measure $\mu $ of $\mathbf{T}$ is given by
\begin{equation}
d\mu (s,t)=\beta _{00}^{2}\left\Vert \frac{1}{t}\right\Vert _{L^{1}(\mu_{(0,1)})}d(\mu_{(0,1)})_{ext;t}(s,t)+[d\sigma(s)-\beta
_{00}^{2}\left\Vert \frac{1}{t}\right\Vert _{L^{1}(\mu_{(0,1)})}d((\mu_{(0,1)})_{ext;t})^{X}(s)]d\delta _{0}(t).\smallskip  \label{measure 1}
\end{equation}
\end{lemma}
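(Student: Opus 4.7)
The plan is to establish the ``if and only if'' by moment identities: the forward direction extracts (i)--(iii) by manipulating the relation between the Berger measure $\mu$ of $\mathbf{T}$ and that of its restriction $\mu_{(0,1)}$, while for the converse I would construct $\mu$ explicitly by formula (\ref{measure 1}) and verify moment matching.

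For necessity, I would begin by assuming $\mathbf{T}$ is subnormal with Berger measure $\mu$ and applying the two-variable analog of (\ref{measure1}) in the $t$-direction, which yields $d\mu_{(0,1)}(s,t)=\frac{t}{\beta_{00}^{2}}d\mu(s,t)$. From here, $\|\frac{1}{t}\|_{L^1(\mu_{(0,1)})}=\frac{1}{\beta_{00}^{2}}\mu(\{t>0\})\leq \frac{1}{\beta_{00}^{2}}$ gives (i) and (ii) at once. Next, since $\int s^{k_1}d\sigma(s)=\gamma_{(k_1,0)}=\int s^{k_1}d\mu^X(s)$ for all $k_1\geq 0$, uniqueness in the (determinate) moment problem on a bounded interval forces $\sigma=\mu^X$. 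A direct substitution using $d\mu_{(0,1)}=(t/\beta_{00}^{2})d\mu$ then shows
\[\beta_{00}^{2}\Bigl\|\tfrac{1}{t}\Bigr\|_{L^1(\mu_{(0,1)})}((\mu_{(0,1)})_{ext;t})^X(E)=\mu((E\times Y)\cap\{t>0\})\leq \mu(E\times Y)=\sigma(E),\]
which is exactly (iii).

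For sufficiency, I would set $C:=\beta_{00}^{2}\|\frac{1}{t}\|_{L^1(\mu_{(0,1)})}\in[0,1]$ and let $\mu$ be defined by (\ref{measure 1}). Condition (iii) is precisely the assertion that $d\sigma-C\,d((\mu_{(0,1)})_{ext;t})^X$ is a positive measure, so $\mu\geq 0$; pairing with the constant function $1$ then gives total mass $C+(1-C)=1$, making $\mu$ a probability measure. The main calculation would then be to verify $\int s^{k_1}t^{k_2}d\mu(s,t)=\gamma_{(k_1,k_2)}$ in two cases. When $k_2=0$, the two $((\mu_{(0,1)})_{ext;t})^X$ contributions cancel and one is left with $\int s^{k_1}d\sigma(s)=\gamma_{(k_1,0)}$. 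When $k_2\geq 1$, the $d\delta_0(t)$ term vanishes and the factor $1/t$ coming from the definition of the extremal measure reduces the integral to $\beta_{00}^{2}\int s^{k_1}t^{k_2-1}d\mu_{(0,1)}(s,t)$, which equals $\gamma_{(k_1,k_2)}$ by the subnormality of $\mathbf{T}|_{\mathcal{L}_{(0,1)}}$.

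The ``moreover'' clause follows because $C=1$ forces the inequality in (iii) to compare two probability measures with one dominated by the other, hence they must coincide. The main obstacle throughout is ensuring positivity of the candidate $\mu$, and this is exactly what (iii) controls; without it, the naive backward-extension formula could produce a signed measure. Everything else is systematic bookkeeping with moment identities and the two given subnormal data $\mu_{(0,1)}$ and $\sigma$.
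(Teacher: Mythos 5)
Your proof is correct: the identity $d\mu_{(0,1)}(s,t)=\frac{t}{\beta_{00}^{2}}\,d\mu(s,t)$ (which follows from determinacy of the compactly supported moment problem) immediately yields (i) and (ii) via $\bigl\|\frac{1}{t}\bigr\|_{L^{1}(\mu_{(0,1)})}=\frac{1}{\beta_{00}^{2}}\mu(\{t>0\})$, condition (iii) is exactly the positivity of $\sigma-\beta_{00}^{2}\bigl\|\frac{1}{t}\bigr\|_{L^{1}(\mu_{(0,1)})}((\mu_{(0,1)})_{ext;t})^{X}=\mu^{X}-\mu^{X}|_{\{t>0\}}$ needed for the candidate measure, and your moment verification in the two cases $k_{2}=0$ and $k_{2}\geq 1$ is accurate. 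The paper states this lemma without proof, citing \cite{CuYo1}; your argument is the standard one used there (split $\mu$ into its part on $\{t>0\}$, recovered from $\mu_{(0,1)}$, plus a remainder supported on $\{t=0\}$ controlled by $\sigma$), so there is nothing to contrast.
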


\begin{observation}
Since the extremal measure $(\mu_{(0,1)})_{ext;t}$ involves normalization (to obtain again a probability measure), it is easy to see that the expression  $\left\Vert \frac{1}{t}\right\Vert _{L^{1}(\mu_{(0,1)})}(\mu_{(0,1)})_{ext;t}$ is equivalent to the expression $\ddfrac{\mu_{(0,1)}}{t}$; we will often use the latter expression. \ For example, Lemma \ref{backext}(iii) can be rewritten as $\beta_{00}^2 (\ddfrac{\mu_{(0,1)}}{t})^X \le \sigma$.
\end{observation}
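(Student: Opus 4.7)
The plan is straightforward: this Observation is essentially a bookkeeping identity that unwinds the normalization built into the definition of the extremal measure. First I would recall the defining formula
\begin{equation*}
d(\mu_{(0,1)})_{ext;t}(s,t) := \frac{1}{t \left\|\frac{1}{t}\right\|_{L^1(\mu_{(0,1)})}} \, d\mu_{(0,1)}(s,t),
\end{equation*}
and multiply both sides by the positive scalar $\left\|\frac{1}{t}\right\|_{L^1(\mu_{(0,1)})}$, which is finite by hypothesis (i) of Lemma \ref{backext}. The right-hand side then becomes $\frac{1}{t} d\mu_{(0,1)}(s,t)$, which is precisely the measure whose Radon-Nikodym derivative with respect to $\mu_{(0,1)}$ is $\frac{1}{t}$; this is exactly what the shorthand $\frac{\mu_{(0,1)}}{t}$ is meant to denote. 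That establishes the first claim in one line.

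For the rewriting of Lemma \ref{backext}(iii), I would simply substitute this identity into the inequality $\beta_{00}^2 \left\|\frac{1}{t}\right\|_{L^1(\mu_{(0,1)})} ((\mu_{(0,1)})_{ext;t})^X \le \sigma$. Since taking the $X$-marginal is linear in the measure (for any Borel $E \subseteq X$ and any scalar $c \ge 0$ one has $(c \cdot \nu)^X(E) = c \cdot \nu(E \times Y)$), the scalar passes inside the marginal, and the inequality collapses to $\beta_{00}^2 \left(\frac{\mu_{(0,1)}}{t}\right)^X \le \sigma$, as claimed.

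There is no real obstacle here: the content of the Observation is purely notational, its purpose being to avoid carrying the normalization factor $\left\|\frac{1}{t}\right\|_{L^1(\mu_{(0,1)})}$ through the lengthy computations that appear later in the paper. The only sanity check worth recording is that $\frac{\mu_{(0,1)}}{t}$ is a finite (generally sub-probability) measure on $X \times Y$, which is guaranteed precisely by the integrability hypothesis $\frac{1}{t} \in L^1(\mu_{(0,1)})$ appearing in Lemma \ref{backext}(i).
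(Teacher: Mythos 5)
Your proof is correct and follows exactly the computation the paper leaves implicit: unwind the definition of the extremal measure so the normalization constant cancels, then use linearity of the marginal to pass the scalar through in Lemma \ref{backext}(iii). Nothing further is needed.
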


Recall that given two positive measures $\mu $ and $\nu $ on $\mathbb{R}_{+}$%
, $\mu $ is said to be \emph{absolutely continuous} with respect to $\nu $
(in symbols, $\mu \ll \nu $) if $\nu (E)=0\Rightarrow \mu (E)=0$ for every
Borel set $E$ on $\mathbb{R}_{+}$.\smallskip

Given a $2$-variable weighted shift $\mathbf{T}=\W$ such that $T_1T_2=T_2T_1$ and $T_i$ is subnormal ($i=1,2$), and given $k_{1},k_{2}\geq 0$, we let%
\begin{equation}
W_{k_{2}}:=\mathrm{shift}(\alpha _{\left( 0,k_{2}\right) },\alpha _{\left(
1,k_{2}\right) },\cdots )\smallskip  \label{Wrow}
\end{equation}%
be the $k_{2}$-th horizontal slice of $T_{1}$ with associated Berger measure
$\xi _{\alpha _{k_{2}}}$; similarly we let
\begin{equation}
V_{k_{1}}:=\mathrm{shift}(\beta _{\left( k_{1},0\right) },\beta _{\left(
k_{1},1\right) },\cdots )\smallskip  \label{Vcol}
\end{equation}%
be the $k_{1}$-th vertical slice of $T_{2}$ with associated Berger measure $%
\eta _{\beta _{k_{1}}}$. \ (Clearly, $W_{0}$ and $V_{0}$ are the unilateral
weighted shifts associated with the $0$-th row and $0$-column in the weight
diagram for $\mathbf{T}$, resp.).\newline

\begin{lemma}
\label{DOM2} (\cite[Theorem 3.1]{CuYo2}) \ Let $\mu $ be the Berger measure
of a subnormal $2$-variable weighted shift, and let $\sigma $ (resp. $\tau$) be the Berger
measure of the associated $0$-th horizontal (resp. vertical) $1$-variable $\mathrm{shift}$ $%
(\alpha _{00},\alpha _{10},\alpha_{20},\cdots )$ (resp. $\mathrm{shift}$ $%
(\beta _{00},\beta _{01},\beta_{02},\cdots ))$. \ Then\ $\sigma =\mu ^{X}$ (resp. $\tau = \mu^Y$).
\end{lemma}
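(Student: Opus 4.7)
The plan is to compare the moment sequences of $\sigma$ and $\mu^X$ and appeal to the uniqueness of the Hausdorff moment problem. Both measures are supported on the compact interval $[0, \|T_1\|^2]$, so equality of moments will force equality of measures.

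First I would unwind the definitions. For the $0$-th row shift $W_0 = \shift(\alpha_{00}, \alpha_{10}, \alpha_{20}, \ldots)$, the $1$-variable moment formula together with the Berger characterization gives, for every $k \geq 0$,
\begin{equation*}
\int s^k \, d\sigma(s) = \gamma_k(\alpha_{00}, \alpha_{10}, \ldots) = \alpha_{(0,0)}^2 \cdots \alpha_{(k-1,0)}^2 = \gamma_{(k,0)}(\alpha,\beta),
\end{equation*}
by the defining formula (\ref{gamma}) with $k_2 = 0$. On the other hand, since $\mu$ is the $2$-variable Berger measure of $\W$,
\begin{equation*}
\gamma_{(k,0)}(\alpha,\beta) = \int_R s^k t^0 \, d\mu(s,t) = \int_{[0,a_1]} s^k \left( \int_{[0,a_2]} d\mu(s,t) \right) = \int s^k \, d\mu^X(s),
\end{equation*}
by the definition of the marginal measure $\mu^X$.

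Combining these two identities yields $\int s^k \, d\sigma(s) = \int s^k \, d\mu^X(s)$ for every $k \geq 0$. Both $\sigma$ and $\mu^X$ are probability measures supported in $[0, \|T_1\|^2]$, so the classical Hausdorff moment problem guarantees that a measure on a compact interval is uniquely determined by its moments. Therefore $\sigma = \mu^X$. The statement $\tau = \mu^Y$ follows by the symmetric argument, exchanging the roles of $T_1$ and $T_2$ (equivalently, of $s$ and $t$).

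There is no real obstacle here; the only subtlety is confirming that the relevant marginal integrals converge and that Fubini applies, which is immediate because $\mu$ has compact support in $R = [0,a_1] \times [0,a_2]$ and the integrand $s^k t^0$ is bounded and continuous on $R$.
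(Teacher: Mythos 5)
Your argument is correct: the moments of $\sigma$ and of $\mu^X$ both equal $\gamma_{(k,0)}(\alpha,\beta)$, and determinacy of compactly supported measures by their moments (Hausdorff/Stone--Weierstrass) forces $\sigma=\mu^X$, with the symmetric argument giving $\tau=\mu^Y$. The paper itself states this lemma as a citation to \cite[Theorem 3.1]{CuYo2} without reproducing a proof, and your moment-comparison route is exactly the standard argument behind that result.
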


\begin{lemma}
\label{Lem3} (\cite{LLY}) \ If $\mu $ is a positive regular Borel measure
defined on $Z:=X\times Y\equiv \mathbb{R}_{+}\times \mathbb{R}_{+}$ and $%
\frac{1}{t}\in L^{1}(\mu )$, then
\begin{equation*}
\left\Vert \frac{1}{t}\right\Vert _{L^{1}(\mu )}=\left\Vert \frac{1}{t}%
\right\Vert _{L^{1}\left( \mu ^{Y}\right) }.
\end{equation*}%
\end{lemma}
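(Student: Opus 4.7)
The claim is essentially an instance of the change-of-variables formula for pushforward measures, applied to the projection $\pi_Y:X\times Y\to Y$, and it reduces to routine Fubini/monotone-convergence bookkeeping. Here is how I would structure the argument.

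First I would recall that, by definition, the marginal measure $\mu^Y$ is the pushforward $\pi_Y{}_*\mu$; that is, $\mu^Y(F)=\mu(\pi_Y^{-1}(F))=\mu(X\times F)$ for every Borel set $F\subseteq Y$. The key fact I will use is the standard pushforward identity
\[
\int_{X\times Y} (g\circ \pi_Y)(s,t)\,d\mu(s,t) = \int_Y g(t)\,d\mu^Y(t)
\]
for every non-negative Borel function $g:Y\to[0,\infty]$ (and more generally for any $g\in L^1(\mu^Y)$, with absolute convergence preserved).

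To establish this identity I would proceed by the usual four-step escalation. Step one: for $g=\mathbf{1}_F$ with $F\subseteq Y$ Borel, the left-hand side equals $\mu(X\times F)$, which by definition of $\mu^Y$ equals $\mu^Y(F)$, matching the right-hand side. Step two: by linearity, the identity extends to non-negative Borel simple functions on $Y$. Step three: any non-negative Borel function $g$ is a pointwise monotone limit of such simple functions, and the monotone convergence theorem (applied on both sides, noting that $g\circ\pi_Y$ is the monotone limit of the corresponding simple functions composed with $\pi_Y$) yields the identity in full generality.

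Finally I would apply the identity to the specific function $g(t):=1/t$, which is non-negative and Borel measurable on $Y=\mathbb{R}_+$. Since by hypothesis $\int_{X\times Y}\frac{1}{t}\,d\mu(s,t)<\infty$, the pushforward identity gives
\[
\left\Vert \tfrac{1}{t}\right\Vert_{L^1(\mu)} = \int_{X\times Y}\tfrac{1}{t}\,d\mu(s,t)=\int_Y \tfrac{1}{t}\,d\mu^Y(t)=\left\Vert \tfrac{1}{t}\right\Vert_{L^1(\mu^Y)},
\]
which in particular shows $1/t\in L^1(\mu^Y)$ and yields the equality of norms. There is no real obstacle in this argument; the only point that requires a moment's care is confirming that $g\circ\pi_Y$ is $\mu$-measurable (it is, since $\pi_Y$ is continuous hence Borel) and that the monotone-convergence step is applied to non-negative integrands, for which the hypothesis $1/t\geq 0$ is tailor-made.
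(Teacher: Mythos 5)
Your argument is correct: the identity is exactly the change-of-variables formula for the pushforward measure $\mu^Y=\mu\circ\pi_Y^{-1}$, established by the standard escalation from indicators to simple functions to nonnegative Borel functions via monotone convergence, and then applied to $g(t)=1/t$. The paper itself gives no proof and simply cites \cite{LLY}; your proof is the standard (and essentially the only) argument for this fact, and it is complete as written.
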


\begin{remark}
For the reader's convenience, throughout the paper we adopt the convention of denoting $\mathbb{R}_+ \times \mathbb{R}_+$ by $X \times Y$, with independent variables $s \in X$ and $t \in Y$. \ Consistent with this, the marginal Berger measures are denoted by $\sigma$ (or $d \sigma(s)$) and $\tau$ (or $d \tau(t)$). \ We have found that doing so keeps a clear distinction, at the level of Berger measures and marginal measures, between the two components $T_1$ and $T_2$ of $\W$.
\end{remark} 


\section{Generalized One-step Reconstruction-of-the-Measure Problem} 

In \cite[Theorem 1.7]{CLY5} we solved the Reconstruction-of-the-Measure Problem (ROMP) for $2$-variable weighted shifts $\W$ under the assumption of subnormality for the restrictions \newline $\W |_{\mathcal{L}_{(1,0)}}$ and $\W |_{\mathcal{L}_{(0,1)}}$, together with a natural compatibility condition; that is, if $\mu_{(1,0)}$ and $\mu_{(0,1)}$ are the respective Berger measures of the restrictions, then 
$$
s \; d\mu_{(0,1)}(s,t)=\lambda t \; d\mu_{(1,0)}(s,t)
$$
for some $\lambda > 0$ (see Figure \ref{ROMPkl}(left)). \ As mentioned in \cite[Remark 3.4]{CLY5}, the proof of that result showed that, using the same techniques, one can solve a slightly more general Reconstruction-of-the-Measure Problem, as follows. \ We now state the more general result, but we leave the details to the reader. 

\begin{theorem} \label{thmgeneralized} (Generalized One-Step Extension; cf. \cite[Remark 3.4]{CLY5}) \ Consider the ROMP in Figure \ref{ROMPkl}(left), and let $\mu_{(k,0)}$ and $\mu_{(0,\ell)}$ be the Berger measures of the restrictions of $\W$ to $\mathcal{L}_{(k,0)}$ and $\mathcal{L}_{(0,\ell)}$, respectively. \ Assume that $s^k \; d\mu_{(0,\ell)}(s,t)=\lambda t^{\ell} \; d\mu_{(k,0)}(s,t)$, where $\lambda:=\frac{\gamma_{(k,0)}}{\gamma_{(0,\ell)}}>0$. \ Then $\W$ is subnormal (with Berger measure $\mu$) if and only if
\begin{itemize}
\item[(i)] \ $\frac{1}{t^{\ell}} \in L^1(\mu_{(0,\ell)})$;
\item[(ii)] \ $\frac{1}{s^{k}} \in L^1(\mu_{(k,0)})$;
\item[(iii)] \ $\gamma_{(k,0)} \left\|\frac{1}{s^k}\right\|_{L^1(\mu_{(k,0)})} = \lambda \gamma_{(0,\ell)} \left\|\frac{1}{s^k}\right\|_{L^1(\mu_{(k,0)})} \le 1$; and
\item[(iv)] \ $\gamma_{(0,\ell)} \{ \int_Y \frac{d \; \mu_{(0,\ell)}(s,t)}{t^{\ell}}+\lambda \left\|\frac{1}{s^k}\right\|_{L^1(\mu_{(k,0)})} d \delta_0(s)-\frac{\lambda}{s^k} \int_Y d\mu_{(k,0)}(s,t) \} \le d \delta_0(s)$.
\end{itemize}
In the case when $\W$ is subnormal, the Berger measure of $\W$ is given by
$$
d \mu(s,t) = \gamma_{(0,\ell)} \ddfrac{d \mu_{(0,\ell)}(s,t)}{t^{\ell}}+\left(d \sigma(s)-\gamma_{(0,\ell)} \int_Y \ddfrac{d \mu_{(0,\ell)}(s,t)}{t^{\ell}}\right) d \delta_0(t) .
$$
\end{theorem}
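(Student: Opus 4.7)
The strategy is to follow the argument of \cite[Theorem 1.7]{CLY5} essentially verbatim, replacing the invariant subspaces $\mathcal{L}_{(1,0)}$ and $\mathcal{L}_{(0,1)}$ by $\mathcal{L}_{(k,0)}$ and $\mathcal{L}_{(0,\ell)}$ and keeping careful track of the resulting powers $s^k$ and $t^\ell$. Throughout, the formula for $d\mu$ displayed in the conclusion of the theorem will serve as the \emph{definition} of the candidate Berger measure, and conditions (i)--(iv) will be invoked to guarantee that this candidate is a bona fide probability measure whose localizations agree with the prescribed data.

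For the necessity direction, if $\W$ is subnormal with Berger measure $\mu$, the $2$-variable analog of (\ref{measure1}) gives
\begin{equation*}
d\mu_{(k,0)}(s,t)=\frac{s^k}{\gamma_{(k,0)}}\,d\mu(s,t) \qquad \text{and} \qquad d\mu_{(0,\ell)}(s,t)=\frac{t^\ell}{\gamma_{(0,\ell)}}\,d\mu(s,t),
\end{equation*}
from which the compatibility relation is automatic. Conditions (i) and (ii) reduce to $\mu(\{t>0\})\le 1$ and $\mu(\{s>0\})\le 1$; condition (iii) is the second of these. For (iv) I would use Lemma \ref{DOM2} to identify $\sigma=\mu^X$ and then decompose $\mu$ along $\{s=0\}\cup\{t=0\}$, which produces exactly the claimed inequality after substituting the expressions for $\mu_{(k,0)}$ and $\mu_{(0,\ell)}$ above.

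For sufficiency, I would take the displayed formula as the definition of $\mu$ and verify in order: (a) $\mu$ is a positive measure; (b) $\mu(\mathbb{R}_+\times \mathbb{R}_+)=1$; and (c) $\int s^{k_1}t^{k_2}\,d\mu=\gamma_{(k_1,k_2)}$ for every $(k_1,k_2)\in\Z$. Step (c) splits naturally into the cases $k_2\ge\ell$ (where multiplying the first summand by $t^{k_2}$ and integrating against $\mu_{(0,\ell)}$ yields $\gamma_{(k_1,k_2)}$ directly) and $k_2=0$ (handled by $\sigma$-integration along the $0$-th row), with the intermediate range $0<k_2<\ell$ following from the compatibility relation together with the subnormality of the $0$-th column. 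The main obstacle, exactly as in \cite[Theorem 1.7]{CLY5}, will be step (a): verifying that the \emph{defect} measure
\begin{equation*}
d\sigma(s)-\gamma_{(0,\ell)}\int_Y\ddfrac{d\mu_{(0,\ell)}(s,t)}{t^\ell},
\end{equation*}
which multiplies $d\delta_0(t)$ in the formula, is nonnegative, and simultaneously that $\mu$ has total mass $1$. Here the compatibility relation allows one to rewrite $\int_Y\frac{d\mu_{(0,\ell)}(s,t)}{t^\ell}$, on the set $\{s>0\}$, as $\frac{\lambda}{s^k}\int_Y d\mu_{(k,0)}(s,t)$, with a residual point mass of size $\lambda\|1/s^k\|_{L^1(\mu_{(k,0)})}$ concentrated at $s=0$ (this is where (iii) is needed to guarantee total mass $1$); substituting these into the defect produces exactly the left-hand side of (iv), and the dominance by $d\delta_0(s)$ is precisely what forces the defect to be a nonnegative multiple of $\delta_0$.
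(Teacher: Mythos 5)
The paper does not actually prove this theorem: it states the result as a direct consequence of the techniques of \cite[Theorem 1.7]{CLY5} (via \cite[Remark 3.4]{CLY5}) and explicitly ``leaves the details to the reader.'' Your plan --- transcribe that argument with $\mathcal{L}_{(1,0)},\mathcal{L}_{(0,1)}$ replaced by $\mathcal{L}_{(k,0)},\mathcal{L}_{(0,\ell)}$ and $s,t$ by $s^{k},t^{\ell}$ --- is therefore exactly the route the paper intends, and your necessity argument (the localization formulas $d\mu_{(k,0)}=\frac{s^{k}}{\gamma_{(k,0)}}d\mu$ and $d\mu_{(0,\ell)}=\frac{t^{\ell}}{\gamma_{(0,\ell)}}d\mu$, followed by Lemma \ref{DOM2}) is standard and correct. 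One small imprecision: the defect measure multiplying $d\delta_0(t)$ need not be a multiple of $\delta_0$; condition (iv) together with the identity $\gamma_{(k,0)}\left\|\frac{1}{s^{k}}\right\|_{L^1(\mu_{(k,0)})}=1-\sigma(\{0\})$ only forces it to be a \emph{positive} measure (it also absorbs whatever mass $\mu_{(k,0)}$ carries on $\{t=0\}$).

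The genuine gap is in step (c) of your sufficiency argument. When $k=\ell=1$ the ranges $k_2\ge 1$ and $k_2=0$ exhaust $\Z$, which is why the CLY5 proof closes; for $k,\ell\ge 2$ your three cases omit the indices $(k_1,k_2)$ with $0<k_1<k$ and $0<k_2<\ell$. There, the compatibility relation converts $\int s^{k_1}t^{k_2}\,d\mu$ into $\gamma_{(k,0)}\int s^{k_1-k}t^{k_2}\,d\mu_{(k,0)}$, which is a legitimate (nonnegative-power) moment of $\mu_{(k,0)}$ only when $k_1\ge k$, while the $0$--th column handles only $k_1=0$. Worse, the target values $\gamma_{(k_1,k_2)}$ in that open rectangle involve weights (e.g.\ $\beta_{(1,0)}$ when $k=\ell=2$) that commutativity pins down only up to a free scalar and that none of $\mu_{(k,0)}$, $\mu_{(0,\ell)}$, $\sigma$, $\tau$ sees; so the moment identity there cannot be \emph{verified} from hypotheses (i)--(iv) alone. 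To close this you must either read the theorem as a completion problem --- the weights with $0<k_1<k$, $0<k_2<\ell$ are \emph{defined} from the reconstructed $\mu$ rather than checked against it --- or add the explicit hypothesis that those moments agree with $\int s^{k_1}t^{k_2}\,d\mu$. The paper glosses over this point as well, but your sketch asserts the step goes through, so it needs to be addressed.
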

\bigskip


\section{Two-Step Extensions}
  
We will now pose and solve the ROMP when even less information is available.

\begin{problem} \label{twostep}
For $k,\ell>0$, consider the canonical invariant subspace $\mathcal{L}_{(k,\ell)}$ (see Figure \ref{ROMPkl}(right)). \ (When $k=\ell=1$, $\mathcal{L}_{(1,1)}$ is the \emph{core} of $\W$.) \ Assume that (i) $\W |_{\mathcal{L}_{(k,\ell)}}$ is subnormal with (two-variable) Berger measure $\nu$; (ii) $\W |_{\mathcal{L}_{(1,0)}}$ is subnormal with (one-variable) Berger measure $\sigma$; and (iii) $\W |_{\mathcal{L}_{(0,1)}}$ is subnormal with (one-variable) Berger measure $\tau$. \ Find necessary and sufficient conditions on $\nu$, $\sigma$ and $\tau$ such that $\W$ is subnormal. \ When $\W$ is indeed subnormal, find its Berger measure $\mu$ in terms of the initial data $\nu$, $\sigma$ and $\tau$.  
\end{problem}

\setlength{\unitlength}{1mm} \psset{unit=1mm} 
\begin{figure}[ht]
\begin{center}
\begin{picture}(105,85)

\rput(-121,25){

\pspolygon*[linecolor=lightgreen](120,40)(120,83)(163,83)(163,40)
\pspolygon*[linecolor=lightyellow](120,20)(120,40)(163,40)(163,20)
\pspolygon*[linecolor=lightblue](100,40)(120,40)(120,83)(100,83)

\psline{->}(100,20)(170,20)
\psline(100,40)(165,40)
\psline(100,60)(165,60)
\psline(100,80)(165,80)
\psline{->}(100,20)(100,90)
\psline(120,20)(120,85)
\psline(140,20)(140,85)
\psline(160,20)(160,85)

\put(91,16){\footnotesize{$(0,0)$}}
\put(115,16){\footnotesize{$(k,0)$}}
\put(135,16){\footnotesize{$(k+1,0)$}}
\put(158,16){\footnotesize{$(k+2,0)$}}

\put(107,21){\footnotesize{$\cdots$}}
\put(107,81){\footnotesize{$\cdots$}}
\put(127,81){\footnotesize{$\cdots$}}
\put(147,81){\footnotesize{$\cdots$}}

\put(91,38){\footnotesize{$(0,\ell)$}}
\put(91,58){\footnotesize{$(0,\ell+1)$}}
\put(94,78){\footnotesize{$(0,\ell+2)$}}

\put(102,28){\footnotesize{$\vdots$}}
\put(162,28){\footnotesize{$\vdots$}}
\put(162,48){\footnotesize{$\vdots$}}
\put(162,68){\footnotesize{$\vdots$}}

\put(145,34){\footnotesize{$\mathcal{L}_{(k,0)}$}}
\put(110,68){\footnotesize{$\mathcal{L}_{(0,\ell)}$}}

\put(121,63){\footnotesize{{\color{blue}The measures $\mu_{(0,\ell)}$ and $\mu_{(k,0)}$}}} 
\put(121,57){\footnotesize{{\color{blue} satisfy the compatibility}}}
\put(121,51){\footnotesize{{\color{blue} condition in $\mathcal{L}_{(k,\ell)}$}}}

}


\rput(43,25){

\pspolygon*[linecolor=lightgreen](40,40)(40,83)(83,83)(83,40)

\psline[linecolor=green,linewidth=3.5pt]{->}(20,20)(90,20)
\psline(20,40)(85,40)
\psline(20,60)(85,60)
\psline(20,80)(85,80)
\psline[linecolor=green,linewidth=3.5pt]{->}(20,20)(20,90)
\psline(40,20)(40,85)
\psline(60,20)(60,85)
\psline(80,20)(80,85)

\put(11,16){\footnotesize{$(0,0)$}}
\put(35,16){\footnotesize{$(k,0)$}}
\put(55,16){\footnotesize{$(k+1,0)$}}
\put(78,16){\footnotesize{$(k+2,0)$}}

\put(27,21){\footnotesize{$\cdots$}}
\put(27,81){\footnotesize{$\cdots$}}
\put(47,81){\footnotesize{$\cdots$}}
\put(67,81){\footnotesize{$\cdots$}}

\put(11,38){\footnotesize{$(0,\ell)$}}
\put(11,58){\footnotesize{$(0,\ell+1)$}}
\put(14,78){\footnotesize{$(0,\ell+2)$}}

\put(22,28){\footnotesize{$\vdots$}}
\put(82,28){\footnotesize{$\vdots$}}
\put(82,48){\footnotesize{$\vdots$}}
\put(82,68){\footnotesize{$\vdots$}}

\put(50,63){\footnotesize{{\color{blue}$\nu$ represents $\W$}}} 
\put(50,57){\footnotesize{{\color{blue}on $\mathcal{L}_{(0,\ell)}\bigcap \mathcal{L}_{(k,0)}$}}}
\put(35,22){\footnotesize{{\color{blue}$\sigma$ represents $\W$ on $0$--th row}}}
\put(21,45){\footnotesize{{\color{blue}$\tau$ represents}}}
\put(21,41){\footnotesize{{\color{blue}$\W$ on}}}
\put(21,37){\footnotesize{{\color{blue}$0$--th column}}}

}

\end{picture}
\end{center}
\caption{(left) The subspaces $\mathcal{L}_{(0,\ell)}$, $\mathcal{L}_{(k,0)}$ and $\mathcal{L}_{(k,\ell)}:=\mathcal L_{(0,\ell)} \bigcap \mathcal{L}_{(k,0)}$} used in Theorem \ref{thmgeneralized}; (right) initial data of two-step ROMP for the subspace $\mathcal{L}_{(k,\ell)}$ . 
\label{ROMPkl}
\end{figure}

Our strategy for solving Problem \ref{twostep} is to build an equivalent problem, whose solution can be found using Theorem \ref{thmgeneralized}. \ To this end, consider the two closed subspaces $\mathcal{L}_{(k,0)}$ and $\mathcal{L}_{(0,\ell)}$. \ We focus first on $\mathcal{L}_{(k,0)}$, and look for necessary conditions for the subnormality of $\W |_{\mathcal{L}_{(k,0)}}$. \ From Lemma \ref{backext}, it is clear that for $\nu$ to extend to the Berger measure of $\W |_{\mathcal{L}_{(k,0)}}$, we need \newline
(i) \ $\ddfrac{1}{t^{\ell}} \in L^1(\nu)$ \newline
(as a simple application of Lemma \ref{backext}, $\ell$ times, reveals). \ Similarly, we must require the conditions \newline
(ii) $\ddfrac{\gamma_{(k,\ell)}}{\gamma_{(k,0)}} \le  \left\|\ddfrac{1}{t^{\ell}}\right\|_{L^1(\nu)}^{-1}$, and \newline
(iii) $\gamma_{(k,\ell)} \displaystyle\int_Y\ddfrac{d \nu(s,t)}{t^{\ell}} \le s^k d \sigma(s)$ \newline
(again, by repeated application of Lemma \ref{backext} and the fact that the subnormal unilateral weighted shift associated with the sequence $\alpha_{(k,0)},\alpha_{(k+1,0)},\ldots$ has Berger measure $s^k\sigma$, properly normalized (cf. (\ref{measure1}))). \ 

It now follows that $\W |_{\mathcal{L}_{(k,0)}}$ is subnormal if and only if (i), (ii) and (iii) hold, and in that case, the Berger measure is reconstructed (using Lemma \ref{backext}) as 
\begin{equation} \label{mu1}
d \mu_{(k,0)}(s,t):= \ddfrac{\gamma_{(k,\ell)}}{\gamma_{(k,0)}} \ddfrac{d \nu(s,t)}{t^{\ell}}+[\ddfrac{s^kd \sigma(s)}{\gamma_{(k,0)}}-\ddfrac{\gamma_{(k,\ell)}}{\gamma_{(k,0)}} \int_Y \ddfrac{d \nu(s,t)}{t^{\ell}}] \cdot d \delta_0(t)
\end{equation}
(observe that the measure inside the square brackets is a one-variable measure in $s$).

Having dealt with the subspace $\mathcal{L}_{(k,0)}$, it is now straightforward to list the necessary and sufficient conditions for $\W$ to be subnormal on the subspace $\mathcal{L}_{(0,\ell)}$; these are \newline
(i) \ $\ddfrac{1}{s^{k}} \in L^1(\nu)$ \newline
(ii) $\ddfrac{\gamma_{(k,\ell)}}{\gamma_{(0,\ell)}} \le  \left\|\ddfrac{1}{s^{k}}\right\|_{L^1(\nu)}^{-1}$, and \newline
(iii) $\gamma_{(k,\ell)} \displaystyle \int_X \ddfrac{d \nu(s,t)}{s^{k}} \le t^{\ell} d \tau(t)$. \newline
When these conditions hold, $\W |_{\mathcal{L}_{(0,\ell)}}$ is subnormal, with Berger measure given by  
\begin{equation} \label{mu2}
d \mu_{(0,\ell)}(s,t):= \ddfrac{\gamma_{(k,\ell)}}{\gamma_{(0,\ell)}} \ddfrac{d \nu(s,t)}{s^{k}}+ d \delta_0(s) \cdot [\ddfrac{t^{\ell}d \tau(t)}{\gamma_{(0,\ell)}}-\ddfrac{\gamma_{(k,\ell)}}{\gamma_{(0,\ell)}} \int_X \ddfrac{d \nu(s,t)}{s^k}] 
\end{equation}
(with the measure inside the square brackets a one-variable measure in $t$).

We are now ready to state and prove a solution to Problem \ref{twostep}.

\begin{theorem} \label{bigthm}
Let $\W\equiv \mathbf{(}T_{1},T_{2})$ be a commuting $2$-variable weighted shift, and let $k,\ell>0$. \ Assume that the unilateral weighted shifts corresponding to the $0$--th row $R_{0}$ and the $0$--th column $C_{0}$ are subnormal, with Berger measures $\sigma$ and $\tau$, respectively. \ Assume also that $\W |_{\mathcal{L}_{(0,\ell)}} \bigcap \mathcal{L}_{(k,0)}$ is subnormal with Berger measure $\nu$. \ Then $\W$ is subnormal (with Berger measure $\mu$) if and only if 
\begin{enumerate}
\item[(NC1)] \ $\ddfrac{1}{s^kt^{\ell}}\in L^{1}(\nu )$ \medskip
\item[(NC2)] \ $\ddfrac{\gamma_{(k,\ell)}}{\gamma_{(0,\ell)}}\int_{X} \ddfrac{d \nu(s,t)}{s^k}\leq d \tau_{\ell}(t):=\ddfrac{t^{\ell} d \tau(t)}{\gamma_{(0,\ell)}}$ \medskip
\item[(NC3)] \ $\ddfrac{\gamma_{(k,\ell)}}{\gamma_{(k,0)}}\int_{Y} \ddfrac{d \nu(s,t)}{t^{\ell}}\leq d \sigma_{k}(s):=\ddfrac{s^{k}d \sigma(s)}{\gamma_{(k,0)}}$ \medskip
\item[(NC4)] \ $\int_{X\times Y} \ddfrac{d \nu(s,t)}{s^kt^{\ell}} = \ddfrac{1}{\gamma_{(k,\ell)}}$ .
\end{enumerate}
When $\W$ is subnormal, the Berger measure $\mu$ is given by
\begin{equation} \label{formula1}
d \mu(s,t)= \gamma_{(k,\ell)} \ddfrac{d \nu(s,t)}{s^kt^{\ell}} + \left(d \sigma(s) - \gamma_{(k,\ell)} \int_Y \ddfrac{d \nu(s,t)}{s^kt^{\ell}} \right) d \delta_0(t) + d \delta_0(s) \left( d \tau(t)-\gamma_{(k,\ell)} \int_X \ddfrac{d \nu(s,t)}{s^kt^{\ell}} \right) ,
\end{equation}
or in the equivalent (succinct) form
\begin{equation} \label{formula2}
\mu = \gamma_{(k,\ell)} \ddfrac{\nu}{s^kt^{\ell}} + \left(\sigma - \gamma_{(k,\ell)} \int_Y \ddfrac{\nu}{s^kt^{\ell}} \right) \times \delta_0 + \delta_0 \times \left(\tau-\gamma_{(k,\ell)} \int_X \ddfrac{\nu}{s^kt^{\ell}} \right) .
\end{equation}
\end{theorem}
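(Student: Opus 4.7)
The plan is to reduce the two-step extension Problem \ref{twostep} to the one-step extension already handled by Theorem \ref{thmgeneralized}. Given the initial data $(\nu,\sigma,\tau)$, I would manufacture two intermediate Berger measures $\mu_{(k,0)}$ on $\mathcal{L}_{(k,0)}$ and $\mu_{(0,\ell)}$ on $\mathcal{L}_{(0,\ell)}$, verify they satisfy the compatibility condition (\ref{compatnew}), and then apply Theorem \ref{thmgeneralized} to reconstruct the Berger measure $\mu$ of $\W$ on $\ell^2(\Z)$.

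For necessity of (NC1)--(NC4), if $\W$ is subnormal with Berger measure $\mu$, then $\W |_{\mathcal{L}_{(k,\ell)}}$ has Berger measure $d\nu = (s^k t^\ell /\gamma_{(k,\ell)}) d\mu$, while $\sigma=\mu^X$ and $\tau=\mu^Y$ by Lemma \ref{DOM2}. Substituting and applying Fubini: (NC1) follows from finiteness of $\mu$; (NC2) and (NC3) in fact become equalities, since $\int_X d\nu/s^k = t^\ell d\mu^Y/\gamma_{(k,\ell)} = t^\ell d\tau/\gamma_{(k,\ell)}$; and (NC4) is the normalization $\mu(X \times Y)=1$ rewritten through $\nu$.

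For sufficiency, I proceed in three steps. Step 1: produce $\mu_{(k,0)}$ via the $\ell$-fold iteration of the backward extension Lemma \ref{backext}, climbing from $\mathcal{L}_{(k,\ell)}$ down through $\mathcal{L}_{(k,\ell-1)},\ldots,\mathcal{L}_{(k,0)}$. The integrability and norm hypotheses follow from (NC1) and (NC4), and the marginal domination at the final stage is exactly (NC3), since the relevant row-$0$ Berger measure on $\mathcal{L}_{(k,0)}$ is $s^k d\sigma/\gamma_{(k,0)}$ (cf.~(\ref{measure1})). The result is formula (\ref{mu1}). A symmetric argument, using (NC2) and $\tau$ in place of (NC3) and $\sigma$, yields $\mu_{(0,\ell)}$ as in (\ref{mu2}). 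Step 2: check the hypotheses of Theorem \ref{thmgeneralized} for the pair $(\mu_{(k,0)},\mu_{(0,\ell)})$. The compatibility equation $s^k d\mu_{(0,\ell)} = \lambda t^\ell d\mu_{(k,0)}$ with $\lambda = \gamma_{(k,0)}/\gamma_{(0,\ell)}$ is clean: multiplying (\ref{mu1}) by $t^\ell$ kills the $d\delta_0(t)$ summand, multiplying (\ref{mu2}) by $s^k$ kills the $d\delta_0(s)$ summand, and both sides reduce to $(\gamma_{(k,\ell)}/\gamma_{(0,\ell)}) d\nu$. Conditions (i)--(iii) of Theorem \ref{thmgeneralized} follow directly from (NC1) and (NC4). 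Step 3: apply Theorem \ref{thmgeneralized}; substituting (\ref{mu2}) into its output formula and simplifying using (\ref{mu1}) gives (\ref{formula1}), whose rewriting in measure-product notation is (\ref{formula2}).

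I expect the main obstacle to be verifying condition (iv) of Theorem \ref{thmgeneralized} for the constructed $\mu_{(0,\ell)}$: this is a three-term measure inequality involving an absolutely continuous part, a $\delta_0(s)$ mass, and a correction term, and matching it against the atomic-plus-continuous decomposition in (\ref{mu2}) requires delicate bookkeeping on the $\{s=0\}$ fibre. A secondary concern is the iterated use of Lemma \ref{backext} in Step 1: at each intermediate level we must check that the Berger measure produced is still a probability measure and that the updated row shift is subnormal, which is exactly where (NC4) and the commutativity relation (\ref{commuting}) enter crucially.
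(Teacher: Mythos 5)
Your proposal follows the same route as the paper's proof: build $\mu_{(k,0)}$ and $\mu_{(0,\ell)}$ by iterated backward extension from $\nu$, check the compatibility condition (both sides reduce to $\frac{\gamma_{(k,\ell)}}{\gamma_{(0,\ell)}}\,d\nu$ exactly as you describe), and invoke Theorem \ref{thmgeneralized}. The one step you leave open --- condition (iv) of Theorem \ref{thmgeneralized} --- is less delicate than you fear, and it is precisely where (NC3) enters (not only (NC1) and (NC4)). By (NC4) and the fact that $\sigma$ and $\tau$ are probability measures, the bracketed correction terms in (\ref{mu1}) and (\ref{mu2}) integrate to zero, so
\begin{equation*}
\int_Y \frac{d\mu_{(0,\ell)}(s,t)}{t^{\ell}}=\frac{\gamma_{(k,\ell)}}{\gamma_{(0,\ell)}}\int_Y\frac{d\nu(s,t)}{s^kt^{\ell}}
\quad\text{and}\quad
\int_Y d\mu_{(k,0)}(s,t)=\frac{s^k\,d\sigma(s)}{\gamma_{(k,0)}};
\end{equation*}
substituting these, together with $\left\|\frac{1}{s^k}\right\|_{L^1(\mu_{(k,0)})}=\frac{1}{\gamma_{(k,0)}}$ and $\lambda=\frac{\gamma_{(k,0)}}{\gamma_{(0,\ell)}}$, the left-hand side of (iv) becomes $\gamma_{(k,\ell)}\int_Y\frac{d\nu(s,t)}{s^kt^{\ell}}+d\delta_0(s)-d\sigma(s)$, which is $\le d\delta_0(s)$ by (NC3); no fibre-by-fibre bookkeeping on $\{s=0\}$ is required. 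One genuine slip in your necessity sketch: (NC2) and (NC3) do \emph{not} become equalities in general. From $d\nu=\frac{s^kt^{\ell}}{\gamma_{(k,\ell)}}\,d\mu$ one gets $\int_X\frac{d\nu(s,t)}{s^k}=\frac{t^{\ell}}{\gamma_{(k,\ell)}}\,\mu\bigl((0,\infty)\times dt\bigr)$, which falls short of $\frac{t^{\ell}}{\gamma_{(k,\ell)}}\,d\mu^Y(t)=\frac{t^{\ell}}{\gamma_{(k,\ell)}}\,d\tau(t)$ by the mass $\mu$ assigns to $\{0\}\times(0,\infty)$; only the inequality survives, which is of course all that (NC2) asserts. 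With these two points repaired, your argument coincides with the paper's.
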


\begin{remark}
Observe that each of the one-variable measures inside the parentheses in (\ref{formula2}) is positive, by (NC3) and (NC2), respectively. \ As a result, the Berger measure $\mu$ is the sum of three positive measures, involving the initial data ($\nu$, $\sigma$ and $\tau$) together with the marginal measures $\nu^X$ and $\nu^Y$.
\end{remark}

\begin{proof}[Proof of Theorem \ref{bigthm}]
We wish to apply the natural generalization of Lemma \ref{backext} when the subspace $\mathcal{M}$ is replaced by $\mathcal{L}_{(k,0})$. \ It is straightforward to list the three required conditions:
\begin{enumerate}
\item[(i)] \ $\ddfrac{1}{t^{\ell}} \in L^1(\nu)$ ; \medskip
\item[(ii)] \ $\ddfrac{\gamma_{(k,\ell)}}{\gamma_{(k,0)}} \le (\left\|\ddfrac{1}{t^{\ell}}\right\|_{L^1(\nu)})^{-1}$ ; \medskip
\item[(iii)] \ $\ddfrac{\gamma_{(k,\ell)}}{\gamma_{(k,0)}} \int_Y \ddfrac{d \nu(s,t)}{t^{\ell}} \le d \sigma_{k}(s)$ .
\end{enumerate}
Observe first that (NC1) implies that both $\frac{1}{s^k}$ and $\frac{1}{t^{\ell}}$ belong to $L^1(\nu)$; for instance, $\frac{1}{t^{\ell}}=s^k \cdot \frac{1}{s^kt^{\ell}}$, and $s^k \in L^{\infty}(\nu)$. \ Thus, the first hypothesis in Lemma \ref{backext} is satisfied. \ To get (ii), we integrate (NC3) with respect to $s$; that is, 
$$
\ddfrac{\gamma_{(k,\ell)}}{\gamma_{(k,0)}}\int_X \int_{Y} \ddfrac{d \nu(s,t)}{t^{\ell}}\leq \int_X d \sigma_{k}(t) = \int_X \ddfrac{s^{k}d \sigma(s)}{\gamma_{(k,0)}} .
$$
It follows that 
$$
\ddfrac{\gamma_{(k,\ell)}}{\gamma_{(k,0)}} \left\|\ddfrac{1}{t^{\ell}}\right\|_{L^1(\nu)} \le \ddfrac{\gamma_{(k,0)}}{\gamma_{(k,0)}}=1,
$$
as desired. \ We now observe that (iii) and (NC3) are identical. \ Having verified (i), (ii) and (iii), we conclude that $\nu$ admits a back-step extension $\mu_{(k,0)}$ to the subspace $\mathcal{L}_{(k,0)}$.

In a completely similar way, we use (NC1) and (NC2) to obtain a back-step extension of $\nu$, denoted $\mu_{(0,\ell)}$, to the subspace $\mathcal{L}_{(0,\ell)}$. 

We now refer to the hypotheses for Theorem \ref{thmgeneralized} for the case of two subspaces $\mathcal{L}_{(k,0)}$ and $\mathcal{L}_{(0,\ell)}$, and Berger measures $\mu_{(k,0)}$ and $\mu_{(0,\ell)}$, respectively. \ First, we need to check the compatibility condition $s^k d \mu_{(0,\ell)}(s,t) = \lambda t^{\ell} d \mu_{(k,0)}(s,t)$, with $\lambda=\frac{\gamma_{(k,0)}}{\gamma_{(0,\ell)}}$. \ By (\ref{mu2}), we have 
$$
s^k d \mu_{(0,\ell)}(s,t) = \frac{\gamma_{(k,\ell)}}{\gamma_{(0,\ell)}} d \nu(s,t)+ s^k d \delta_0(s) \cdot [d\; \tau(t)-\frac{\gamma_{(k,\ell)}}{\gamma_{(0,\ell)}} \int_X d \; \nu(s,t)] = \frac{\gamma_{(k,\ell)}}{\gamma_{(0,\ell)}} d \nu(s,t) . 
$$
On the other hand, by (\ref{mu1}),
$$
\lambda t^{\ell} d \mu_{(k,0)}(s,t) = \frac{\gamma_{(k,\ell)}}{\gamma_{(0,\ell)}} d \nu(s,t)+ \frac{\gamma_{(k,0)}}{\gamma_{(0,\ell)}}[d \sigma(s)-\frac{\gamma_{(k,\ell)}}{\gamma_{(k,0)}} \int_X d \nu(s,t)] \cdot t^{\ell} d \delta_0(t)  = \frac{\gamma_{(k,\ell)}}{\gamma_{(0,\ell)}} d \nu(s,t) . 
$$
It is now clear that the compatibility condition holds. \ We now calculate the $L^1$--norms of $\frac{1}{t^{\ell}}$ and $\frac{1}{s^k}$, using (\ref{mu2}) and (\ref{mu1}), respectively; in each case, we use (NC4) in the last step.

\begin{eqnarray*}
\int_{X\times Y}\frac{1}{t^{\ell}}d \mu _{(0,\ell)}(s,t) &=& \frac{\gamma_{(k,\ell)}}{\gamma_{(0,\ell)}}\int_{X\times Y}\frac{%
d \nu(s,t) }{s^kt^{\ell}}+\int_{X}d \delta _{0}(s)[\int_{Y}\frac{d \tau(t) }{\gamma_{(0,\ell)}}-\frac{\gamma_{(k,\ell)}}{\gamma_{(0,\ell)}}\int_{Y}\int_{X}\frac{d\nu(s,t) }{s^kt^{\ell}}] \\
&=&\frac{1}{\gamma_{(0,\ell)}}<\infty
\end{eqnarray*}%
and%
\begin{eqnarray*}
\int_{X\times Y}\frac{1}{s^k} d \mu_{(k,0)}(s,t) &=&\frac{\gamma_{(k,\ell)}}{\gamma_{(k,0)}} \int_{X\times Y}\frac{%
d \nu(s,t) }{s^kt^{\ell}}+[\int_{X}\frac{d \sigma(s)}{\gamma_{(k,0)}}-\frac{\gamma_{(k,\ell)}}{\gamma_{(k,0)}}\int_{X}\int_{Y}\frac{d \nu(s,t)}{s^kt^{\ell}}]\int_{Y}d \delta_{0}(t) \\
&=&\frac{1}{\gamma_{(k,0)}}<\infty .
\end{eqnarray*}
Moreover,
\begin{equation*}
\lambda \gamma_{(0,\ell)}\left\Vert \frac{1}{s^k}\right\Vert
_{L^{1}(\mu_{(k,0)})}=\gamma_{(k,0)}\cdot \frac{1}{\gamma_{(k,0)}}=1.
\end{equation*}
Finally, we need to check condition (iv) in Theorem \ref{thmgeneralized}:
$$
\gamma_{(0,\ell)} \{ \int_Y \frac{d \mu_{(0,\ell)}(s,t)}{t^{\ell}}+\lambda \left\|\frac{1}{s^k}\right\|_{L^1(\mu_{(k,0)})} d \delta_0(s)-\frac{\lambda}{s^k} \int_Y d \mu_{(k,0)}(s,t) \} \le d \delta_0(s).
$$
In preparation for this, we first calculate the two integrals on the left-hand side of (iv). \ Observe that 
\begin{equation*}
\int_{Y}\ddfrac{d \mu_{(0,\ell)}}{t^{\ell}}=\ddfrac{\gamma_{(k,\ell)}}{\gamma_{(0,\ell)}}\int_{Y}\ddfrac{d \nu(s,t) }{s^kt^{\ell}}+d \delta
_{0}(s) \lbrack \int_Y \frac{d \tau(t)}{\gamma_{(0,\ell)}}-\ddfrac{\gamma_{(k,\ell)}}{\gamma_{(0,\ell)}}\int_Y \int_{X}\frac{d \nu(s,t)}{s^kt^{\ell}}\rbrack=\ddfrac{\gamma_{(k,\ell)}}{\gamma_{(0,\ell)}}\int_{Y}\ddfrac{d\nu(s,t) }{s^kt^{\ell}}
\end{equation*}%
(the last step uses both the fact that $\tau$ is a probability measure and condition (NC4), and as a result, the quantity in square brackets is $0$). \ Similarly,
\begin{equation*}
\int_{Y}d \mu_{(k,0)}(s,t)=\ddfrac{\gamma_{(k,\ell)}}{\gamma_{(0,k)}}\int_{Y}\ddfrac{d \nu(s,t)}{t^{\ell}}+(\ddfrac{s^k d \sigma(s)}{\gamma_{(k,0)}}-\ddfrac{\gamma_{(k,\ell)}}{\gamma_{(0,k)}}\int_{Y}\ddfrac{d \nu(s,t)}{t^{\ell}})\int_{Y} d \delta_{0}(t)=\ddfrac{s^k d \sigma(s)}{\gamma_{(k,0)}}.
\end{equation*}%
We are now ready to establish (iv).
\begin{eqnarray*}
\gamma_{(0,\ell)} \{ \int_Y \ddfrac{d \mu_{(0,\ell)}(s,t)}{t^{\ell}} & + & \lambda \left\|\ddfrac{1}{s^k}\right\|_{L^1(\mu_{(k,0)})} d \delta_0(s)-\ddfrac{\lambda}{s^k} \int_Y d\mu_{(k,0)}(s,t) \} \\
& = & \gamma_{(0,\ell)} \{\ddfrac{\gamma_{(k,\ell)}}{\gamma_{(0,\ell)}}\int_{Y}\ddfrac{d\nu(s,t) }{s^kt^{\ell}}+\lambda \ddfrac{1}{\gamma_{(k,0)}} d \delta_0(s) - \ddfrac{\lambda}{s^k} \ddfrac{s^k d \sigma(s)}{\gamma_{(k,0)}} \} \\
& = & \gamma_{(k,\ell)} \int_Y \ddfrac{d \nu(s,t)}{s^kt^{\ell}}+d \delta_0(s) - d \sigma(s) \; \; \; \; (\textrm{recall that } \lambda=\ddfrac{\gamma_{(k,0)}}{\gamma_{(0,\ell)}} )\\
& \le & d \sigma(s)+d \delta_0(s) - d \sigma(s)=d \delta_0(s),
\end{eqnarray*}%
where we have used (NC3) in the penultimate step. \ This proves Theorem \ref{thmgeneralized}(iv). \ By Theorem \ref{thmgeneralized}, we now know that $\W$ is subnormal. \ To complete the proof, we need to explicitly compute its Berger measure.

By Theorem \ref{thmgeneralized}, using the subspace $\mathcal{L}_{(0,\ell)}$, the measure $\mu_{(0,\ell)}$ and the moment $\gamma_{(0,\ell)}$, we can reconstruct the Berger measure of $\W$ as
$$
\mu = \gamma_{(0,\ell)} \ddfrac{\mu_{(0,\ell)}}{t^{\ell}}+\left(\sigma-\gamma_{(0,\ell)} \int_Y \ddfrac{\mu_{(0,\ell)}}{t^{\ell}}\right) \times \delta_0 .
$$
Now observe that 
$$
\gamma_{(0,\ell)}\int_Y \ddfrac{d \mu_{(0,\ell)}(s,t)}{t^{\ell}} = \gamma_{(k,\ell)} \int_Y \ddfrac{d \nu(s,t)}{s^kt^{\ell}} + d \delta_0(s) \left(1-\gamma_{(k,\ell)} \int_Y \int_X \ddfrac{d \nu(s,t)}{s^kt^{\ell}}\right)=\gamma_{(k,\ell)} \int_Y \ddfrac{d \nu(s,t)}{s^kt^{\ell}}
$$
(using (NC4) to obtain the last equality). \ Then
$$
\mu=\gamma_{(0,\ell)} \ddfrac{\mu_{(0,\ell)}}{t^{\ell}}+\left(\sigma-\gamma_{(k,\ell)} \int_Y \ddfrac{\nu}{s^kt^{\ell}}\right) \times \delta_0 . 
$$
We now use (\ref{mu2}) to obtain
$$
\mu=\gamma_{(k,\ell)} \ddfrac{d \nu}{s^{k}t^{\ell}}+ \delta_0 \times \left(\tau-\gamma_{(k,\ell)} \int_X \ddfrac{d \nu}{s^k}\right)+\left(\sigma-\gamma_{(k,\ell)} \int_Y \ddfrac{\nu}{s^kt^{\ell}}{t^{\ell}}\right) \times \delta_0 , 
$$
as desired.
\end{proof}


\section{ROMP for Canonical Invariant Subspaces} \label{generalROMP}

Let $P \subseteq \mathbb{Z}_+^2$ be such that $P+\mathbb{Z}_+^2=P$, and let $\mathcal{L}_P$ be the closed subspace of $\ell^2(\mathbb{Z}_+^2)$ generated by the orthonormal basis vectors $e_{\mathbf{k}}$, where $\mathbf{k} \in P$. \ The subspace $\mathcal{L}_P$ is invariant under $T_1$ and $T_2$, for any $2$-variable weighted shift $\W \equiv(T_1,T_2)$. 

\begin{definition} \label{canonical}
A set $P$ such that $P+\Z=P$ is said to be \emph{full}. \ A closed subspace $\mathcal{S}$ of $\ell^2(\Z)$ invariant under $T_1$ and $T_2$ is said to be \emph{canonical} if $\mathcal{S}=\mathcal{L}_P$ for some full set $P$.  
\end{definition}

\begin{remark}
Typical examples of full and non-full sets are given in Figure \ref{full}. \ Observe that for any nonempty set $P \subseteq \Z$, the set $P+\Z$ is full. \ Observe also that any full set $P$ is the union of the (full) sets $\mathbf{k}+\Z$, where $\mathbf{k}$ is a point in $P$; in symbols, $P=\bigcup_{\mathbf{k} \in P} (\mathbf{k}+\Z)$. \ Thus, $\mathcal{L}_P = \bigvee_{\mathbf{k} \in P} \mathcal{L}_{\{\mathbf{k}\}}$ (cf. the left diagram in Figure \ref{full}). \ As a consequence, if $P$ and $Q$ are full sets, and if $\mathcal{L}_P=\mathcal{L}_Q$, then $P=Q$. \ (With slight abuse of notation, we will often write $\mathcal{L}_{\mathbf{k}}$ instead of $\mathcal{L}_{\{\mathbf{k}\}}$).
\end{remark}



\begin{definition}
Let $P$ be a full set in $\Z$. \ Amongst all subsets $Q$ of $P$ satisfying $Q+\Z=P$, there is a smallest one; i.e., one that is contained in any other subset $R$ of $P$ such that $R+\Z=P$. \ We will call this minimal set the \emph{foundation} of $P$, and denote it by $\fnd(P)$. \ \ We now define $\stc(P)$ to be the unique nonincreasing path contained in $P$ and including all points in $\fnd(P)$. 
\end{definition}

\begin{remark} (i) The points of $\fnd(P)$ are characterized by the following property: $\bk \in \fnd(P)$ if and only $\bk \in P$, $\mathbf{k}-\varepsilon_1 \notin P$ and $\mathbf{k}-\varepsilon_2 \notin P$. 

(ii) \ For $P$ a full set, observe that $\fnd(P)$ is always a finite set, and that $\fnd(P) \subseteq \stc(P) \subseteq P$ (by definition). \ Also, $\fnd(P)+\Z=\stc(P)+\Z=P$. \newline 
(iii) It is easy to see that $\stc(P)$ can be represented by a descending staircase; for a visual depiction, see the green staircase in the left diagram of Figure \ref{full}. \newline
(iv) \ To list the points in $\stc(P)$, we will follow the descending staircase from left to right; that is, $\mathbf{p}\equiv(p_1,p_2)$ will come before $\mathbf{q}\equiv(q_1,q_2)$ if and only if $p_1 \le q_1$ and $p_2 \ge q_2$. \newline
(v) If $P$ is full, one can recover $P$ from $\stc(P)$ by selecting all points in $\Z$ which are located above and to the right of the descending staircase representing $\stc(P)$.
\end{remark}

\setlength{\unitlength}{1mm} \psset{unit=1mm}
\begin{figure}[th]
\begin{center}
\begin{picture}(150,80)

\rput(0,29){

\psline{->}(5,20)(70,20)
\psline(5,35)(68,35)
\psline(5,50)(68,50)
\psline(5,65)(68,65)
\psline(5,80)(68,80)

\psline{->}(5,20)(5,87)
\psline(20,20)(20,84)
\psline(35,20)(35,84)
\psline(50,20)(50,84)
\psline(65,20)(65,84)

\put(1,16){\footnotesize{$(0,0)$}}
\put(17,16){\footnotesize{$(1,0)$}}
\put(-4,35){\footnotesize{$(0,1)$}}
\put(34,16){\footnotesize{$\cdots$}}
\put(46.5,16){\footnotesize{$(k,0)$}}
\put(-4.4,65){\footnotesize{$(0,k)$}}
\put(64,16){\footnotesize{$\cdots$}}

\psframe[fillstyle=solid,fillcolor=blue](3,63)(7,67)
\psframe[fillstyle=solid,fillcolor=blue](18,48)(22,52)
\psframe[fillstyle=solid,fillcolor=blue](48,33)(52,37)

\psframe[fillstyle=solid,fillcolor=red](4,19)(6,21)
\psframe[fillstyle=solid,fillcolor=red](19,19)(21,21)
\psframe[fillstyle=solid,fillcolor=red](34,19)(36,21)
\psframe[fillstyle=solid,fillcolor=red](49,19)(51,21)
\psframe[fillstyle=solid,fillcolor=red](64,19)(66,21)

\psframe[fillstyle=solid,fillcolor=red](4,34)(6,36)
\psframe[fillstyle=solid,fillcolor=red](19,34)(21,36)
\psframe[fillstyle=solid,fillcolor=red](34,34)(36,36)
\pscircle[fillstyle=solid,fillcolor=lightblue](50,35){1.3}
\pscircle[fillstyle=solid,fillcolor=lightblue](65,35){1.3}

\psframe[fillstyle=solid,fillcolor=red](4,49)(6,51)
\pscircle[fillstyle=solid,fillcolor=lightblue](20,50){1.3}
\pscircle[fillstyle=solid,fillcolor=lightblue](35,50){1.3}
\pscircle[fillstyle=solid,fillcolor=lightblue](50,50){1.3}
\pscircle[fillstyle=solid,fillcolor=lightblue](65,50){1.3}

\pscircle[fillstyle=solid,fillcolor=lightblue](5,65){1.3}
\pscircle[fillstyle=solid,fillcolor=lightblue](20,65){1.3}
\pscircle[fillstyle=solid,fillcolor=lightblue](35,65){1.3}
\pscircle[fillstyle=solid,fillcolor=lightblue](50,65){1.3}
\pscircle[fillstyle=solid,fillcolor=lightblue](65,65){1.3}

\pscircle[fillstyle=solid,fillcolor=lightblue](5,80){1.3}
\pscircle[fillstyle=solid,fillcolor=lightblue](20,80){1.3}
\pscircle[fillstyle=solid,fillcolor=lightblue](35,80){1.3}
\pscircle[fillstyle=solid,fillcolor=lightblue](50,80){1.3}
\pscircle[fillstyle=solid,fillcolor=lightblue](65,80){1.3}

\psline[linecolor=darkgreen, linewidth=2pt](6,65)(19,65)
\psline[linecolor=darkgreen, linewidth=2pt](20,64)(20,51)
\psline[linecolor=darkgreen, linewidth=2pt](21,50)(34,50)
\psline[linecolor=darkgreen, linewidth=2pt](36,50)(49,50)
\psline[linecolor=darkgreen, linewidth=2pt](50,49)(50,36)
\psline[linecolor=darkgreen, linewidth=2pt](51,35)(64,35)

\put(35.5,55){\footnotesize{$\vdots$}}
\put(41,50.5){\footnotesize{$\cdots$}}
\put(55.5,36){\footnotesize{$\cdots$}}

\psline{->}(30,14)(45,14)
\put(35,10){$\rm{T}_1$}
\psline{->}(-0.5,45)(-0.5,60)
\put(-6,52){$\rm{T}_2$}
\put(1,48){$\vdots$}
\put(1,78){$\vdots$}
\put(76,48){$\vdots$}
\put(76,78){$\vdots$}


\psline{->}(105,14)(120,14)
\put(110,10){$\rm{T}_1$}

\psline{->}(80,20)(145,20)
\psline(80,35)(143,35)
\psline(80,50)(143,50)
\psline(80,65)(143,65)
\psline(80,80)(143,80)

\psline{->}(80,20)(80,87)
\psline(95,20)(95,84)
\psline(110,20)(110,84)
\psline(125,20)(125,84)
\psline(140,20)(140,84)

\put(76,16){\footnotesize{$(0,0)$}}
\put(92,16){\footnotesize{$(1,0)$}}
\put(109,16){\footnotesize{$\cdots$}}
\put(121.5,16){\footnotesize{$(k,0)$}}
\put(139,16){\footnotesize{$\cdots$}}

\put(110.5,55){\footnotesize{$\vdots$}}
\put(116,50.5){\footnotesize{$\cdots$}}
\put(125.5,40){\footnotesize{$\vdots$}}

\pscircle[fillstyle=solid,fillcolor=blue](80,20){1.3}
\pscircle[fillstyle=solid,fillcolor=blue](95,20){1.3}
\pscircle[fillstyle=solid,fillcolor=blue](110,20){1.3}
\psframe[fillstyle=solid,fillcolor=red](124,19)(126,21)

\pscircle[fillstyle=solid,fillcolor=blue](80,35){1.3}
\psframe[fillstyle=solid,fillcolor=red](94,34)(96,36)
\pscircle[fillstyle=solid,fillcolor=blue](110,35){1.3}
\pscircle[fillstyle=solid,fillcolor=blue](125,35){1.3}

\psframe[fillstyle=solid,fillcolor=red](79,49)(81,51)
\pscircle[fillstyle=solid,fillcolor=blue](95,50){1.3}
\psframe[fillstyle=solid,fillcolor=red](109,49)(111,51)
\psframe[fillstyle=solid,fillcolor=red](124,49)(126,51)

\pscircle[fillstyle=solid,fillcolor=blue](80,65){1.3}
\pscircle[fillstyle=solid,fillcolor=blue](95,65){1.3}
\psframe[fillstyle=solid,fillcolor=red](109,64)(111,66)
\pscircle[fillstyle=solid,fillcolor=blue](125,65){1.3}

}

\end{picture}
\end{center}
\caption{(left) Diagram of a full set {\color{lightblue}$P \;$} ({\color{lightblue}$P$}$+\Z=${\color{lightblue}$P$}), its foundation {\color{blue}$\fnd(P)$}, and its descending staircase {\color{darkgreen}$\stc(P)$}; (right) diagram of a non-full set {\color{blue} $Q \;$} ({\color{blue}$Q$}$+\Z \ne$ {\color{blue}$Q$}). \quad \; {\footnotesize{({\it Color codes}: a {\color{lightblue}light-blue disk} denotes a point in $P$, a {\color{red} red square} denotes a point not in $P$, and a {\color{blue}blue square} denotes a point in {\color{blue}$\fnd(P)$}.)}}}
\label{full}
\end{figure}
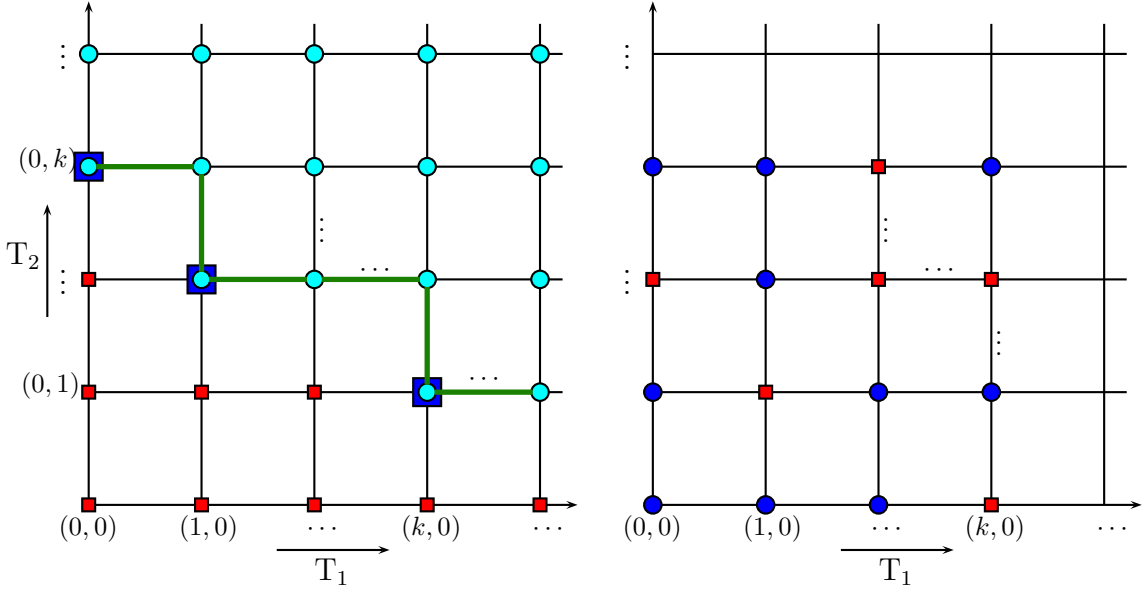


Consider now a $2$-variable weighted shift $\W$ and a canonical invariant 
subspace $\mathcal{S}$, which is necessarily of the form $\mathcal{L}_P$ for 
some $P$ full. \ Let $\fnd(P)$ be the foundation $P$; it follows that $
\mathcal{S} = \bigvee_{\mathbf{k} \in \fnd(P)} \mathcal{L}_{\mathbf{k}}$. 
\ Assume now $\W |_{\mathcal{L}_{\bk}}$ is subnormal. \ It 
immediately follows that, for each $\bk \in \fnd(P)$, the restriction of $
\W$ to the subspace $\mathcal{L}_{\mathbf{k}}$ must be subnormal. \ Now, it 
is clear that $\W |_{\lk}$ is a $2$-variable weighted shift, so it 
has a Berger measure, which we will denote by $\nu_{\bk}$. \ Thus, the 
subnormality of $\W |_{\mathcal{L}_P}$ translates into the 
existence of a finite family of Berger measures $\nu_{\bk}$, one for each point $\bk \in \fnd(P)$. \ 

\medskip There is a compatibility condition, however. \ If we consider two points $\mathbf{p}, \mathbf{q} \in \fnd(P)$, we know that the intersection of the canonical invariant subspaces $\mathcal{L}_{\mathbf{p}}$ and $\mathcal{L}_{\mathbf{q}}$ is \newline
$\mathcal{L}_{(\max\{p_1,q_1\},\max\{p_2,q_2\})}$. \ Without loss of generality, assume that $p_1 \le q_1$ and $q_2 \le p_2$. \ Then $\mathcal{R}:=\mathcal{L}_{\mathbf{p}} \bigcap \mathcal{L}_{\mathbf{q}} = \mathcal{L}_{(q_1,p_2)}$. \ By (\ref{measure1}), the Berger measure of $\W |_{\mathcal{R}}$ is given by two expressions, namely 
$$
\ddfrac{\gamma_{(p_1,p_2)}}{\gamma_{(q_1,p_2)}}s^{q_1-p_1} \nu_{(p_1,p_2)} \quad \textrm{and } \quad \ddfrac{\gamma_{(q_1,q_2)}}{\gamma_{(q_1,p_2)}}t^{p_2-q_2} \nu_{(q_1,q_2)}.
$$
\medskip
Therefore, as a necessary condition for the solubility of ROMP we must require 
$$
\ddfrac{\gamma_{(p_1,p_2)}}{\gamma_{(q_1,p_2)}}s^{q_1-p_1} \nu_{(p_1,p_2)} = \ddfrac{\gamma_{(q_1,q_2)}}{\gamma_{(q_1,p_2)}}t^{p_2-q_2} \nu_{(q_1,q_2)} ,
$$
that is, 
\begin{equation} \label{compat}
\gamma_{\mathbf{p}} s^{q_1-p_1}\nu_{\mathbf{p}} = \gamma_{\mathbf{q}} t^{p_2-q_2}\nu_{\mathbf{q}} .
\end{equation}
\medskip
We will denote by $\nu_{\mathbf{pq}}$ the Berger measure of $\W$ restricted to $\mathcal{L}_{\mathbf{p}} \bigcap \mathcal{L}_{\mathbf{q}}$. \ Similarly, we will let $\sigma_{p_1}$ and $\tau_{q_2}$ denote the restrictions of $\sigma$ to $\mathcal{L}_{p_1} (\subseteq \ell^2(\mathbb{Z}_+))$ and $\tau$ to $\mathcal{L}_{q_2} (\subseteq \ell^2(\mathbb{Z}_+))$, respectively. \ With the above mentioned compatibility condition in mind, we are ready to state and prove the solution of ROMP for arbitrary canonical invariant subspaces. \   

\begin{theorem} \label{canonicalthm}
Let $\W$ be a $2$-variable weighted shift, let $\mathcal{L}_P$ be a canonical invariant subspace, and assume that $\W |_{\mathcal{L}_P}$ is subnormal. \ Let $\fnd(P)$ be the foundation of $P$ (with points listed in descending-staircase order), and let $\{\nu_{\bk}\}_{\bk \in \fnd(P)}$ be the finite family of Berger measures for the restrictions of $\W$ to the subspaces $\mathcal{L}_{\bk} \; (\bk \in \fnd(P))$. \ Suppose that the compatibility condition (\ref{compat}) holds for every $\mathbf{p},\mathbf{q} \in \fnd(P)$. \ In addition, let $\sigma$ and $\tau$ be the Berger measures of the $0$--th row and $0$--th column in the weigh diagram of $\W$. \ The following statements are equivalent.
\begin{enumerate}
\item[(i)] \ $\W$ is subnormal.
\item[(ii)] \ For every $\mathbf{p}, \mathbf{q} \in \fnd(P)$, the ROMP with initial data $\nu_{\mathbf{pq}}$, $\sigma_{p_1}$ and $\tau_{q_2}$ is soluble.
\item[(iii)] \ For every consecutive pair of points $\mathbf{p}, \mathbf{q} \in \fnd(P)$ (in the descending-staircase order), the ROMP with initial data $\nu_{\mathbf{pq}}$, $\sigma_{p_1}$ and $\tau_{q_2}$ is soluble.
\item[(iv)] \ For every $\mathbf{p}, \mathbf{q} \in \fnd(P)$, with $p_1 \ne q_1$ and $p_2 \ne q_2$, the ROMP with initial data $\nu_{\mathbf{pq}}$, $\sigma_{p_1}$ and $\tau_{q_2}$ is soluble.
\end{enumerate}
\end{theorem}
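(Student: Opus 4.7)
The plan is to show the four conditions are equivalent by traversing the cycle (i) $\Rightarrow$ (ii) $\Rightarrow$ (iv) $\Rightarrow$ (iii) $\Rightarrow$ (i). The middle implications are purely formal: (iv) restricts the universal quantifier in (ii) to pairs of distinct coordinates, and consecutive foundation points in the descending staircase automatically satisfy $p_{i,1} < p_{i+1,1}$ and $p_{i,2} > p_{i+1,2}$, placing them among such pairs. So the only genuine work lies in (i) $\Rightarrow$ (ii) and (iii) $\Rightarrow$ (i).

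For (i) $\Rightarrow$ (ii), I would exploit the necessary direction of Theorem \ref{bigthm}. If $\W$ is subnormal with Berger measure $\mu$, then formula (\ref{measure1}) identifies $\nu_{\mathbf{pq}}$ with the rescaled restriction of $\mu$ to $\mathcal{L}_{\mathbf{p}} \cap \mathcal{L}_{\mathbf{q}} = \mathcal{L}_{(q_1,p_2)}$, and Lemma \ref{DOM2} together with (\ref{measure1}) identifies $\sigma_{p_1}, \tau_{q_2}$ with the corresponding one-variable restrictions of $\sigma, \tau$. The necessary conditions (NC1)--(NC4) then hold automatically, exhibited by $\mu$ itself, so the localized ROMP is soluble for every $\mathbf{p}, \mathbf{q} \in \fnd(P)$.

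The principal work is (iii) $\Rightarrow$ (i). Order the foundation points $\mathbf{p}_1, \ldots, \mathbf{p}_n$ along the descending staircase. When $n = 1$, hypothesis (iii) is vacuous and the conclusion follows from Theorem \ref{bigthm} applied directly to $\mathcal{L}_{\mathbf{p}_1}$ with data $\nu_{\mathbf{p}_1}, \sigma, \tau$; so assume $n \ge 2$. For any consecutive pair $(\mathbf{p}_i, \mathbf{p}_{i+1})$, the intersection $\mathcal{L}_{\mathbf{p}_i} \cap \mathcal{L}_{\mathbf{p}_{i+1}} = \mathcal{L}_{(p_{i+1,1}, p_{i,2})}$ sits at a corner with both indices positive, since the staircase is strictly monotone in each coordinate. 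Solubility of the associated ROMP, combined with Theorem \ref{bigthm} applied to this corner with parameters $k = p_{i+1,1}$ and $\ell = p_{i,2}$ and initial data derived from $(\nu_{\mathbf{p}_i \mathbf{p}_{i+1}}, \sigma, \tau)$, yields subnormality of $\W$ together with the explicit Berger measure (\ref{formula1}).

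The main obstacle is to translate the solubility hypothesis, phrased via $\sigma_{p_{i,1}}$ and $\tau_{p_{i+1,2}}$, into the precise form of (NC1)--(NC4) of Theorem \ref{bigthm} (whose indices in $\sigma_k, \tau_\ell$ run over $k = p_{i+1,1}$ and $\ell = p_{i,2}$). This reduces to careful tracking of rescalings via (\ref{measure1}) and invoking the compatibility condition (\ref{compat}) to ensure that the pair's localized data is consistent with the global data $\sigma, \tau$ and $\{\nu_{\bk}\}_{\bk \in \fnd(P)}$. Once this translation is established, subnormality of $\W$ follows from a single application of Theorem \ref{bigthm} to any consecutive pair, closing the cycle and completing the proof.
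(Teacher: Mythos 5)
Your reduction of the cyclic implications to the single substantive step (iii) $\Rightarrow$ (i) is reasonable, and the (i) $\Rightarrow$ (ii) direction is fine, but the argument you give for (iii) $\Rightarrow$ (i) has a genuine gap: a \emph{single} application of Theorem \ref{bigthm} to one consecutive pair $(\mathbf{p}_i,\mathbf{p}_{i+1})$ cannot yield subnormality of $\W$. The solubility hypothesis in (iii) is \emph{localized}: it is phrased in terms of the truncated marginals $\sigma_{p_{i,1}}$ and $\tau_{p_{i+1,2}}$, so at best it produces a Berger measure for the restriction of $\W$ to the quadrant subspace $\mathcal{L}_{(p_{i,1},\,p_{i+1,2})}$, not for $\W$ itself. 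That subspace in general contains neither the other foundation subspaces $\mathcal{L}_{\mathbf{p}_j}$ ($j\ne i,i+1$) nor the origin. To invoke Theorem \ref{bigthm} with the \emph{global} data $(\nu_{\mathbf{p}_i\mathbf{p}_{i+1}},\sigma,\tau)$, as you propose, you would have to verify (NC1)--(NC4) relative to the full measures $\sigma$ and $\tau$, and nothing in hypothesis (iii) for that one pair supplies this; the ``careful tracking of rescalings'' you defer to is precisely where the missing content lies, since bridging the gap between $(p_{i,1},p_{i+1,2})$ and $(0,0)$ requires the data attached to all the other foundation points.

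The paper's proof is instead an iteration along the whole staircase. For each consecutive pair one applies the generalized \emph{one-step} extension (Theorem \ref{thmgeneralized}), translated to the relevant inner corner, to merge the two measures into a single Berger measure on $\mathcal{L}_{(p_{i,1},p_{i+1,2})}$; this removes one corner of $\stc(P)$, and one repeats until a single quadrant measure remains. Only then does one descend to $(0,0)$, and how one does so depends on whether $\stc(P)$ meets the coordinate axes: by repeated back-step extensions (Lemma \ref{backext}) in Types I and II, by Theorem \ref{thmgeneralized} alone in Type III, and by a final application of the two-step result (Theorem \ref{bigthm}) in Type IV. Your proposal collapses this whole chain into one step, and in doing so skips the mechanism that actually propagates the local solubility data across the entire foundation set.
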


\begin{proof}
Looking at the staircase diagram in Figure \ref{full}(left), we can easily see that four basic and distinct descending-staircase types for $\fnd(P)$ arise. \ We exhibit these types in Figures \ref{Figure 4} and \ref{Figure 5}. \ We now analyze ROMP for each descending-staircase type.

\setlength{\unitlength}{1mm} \psset{unit=1mm}
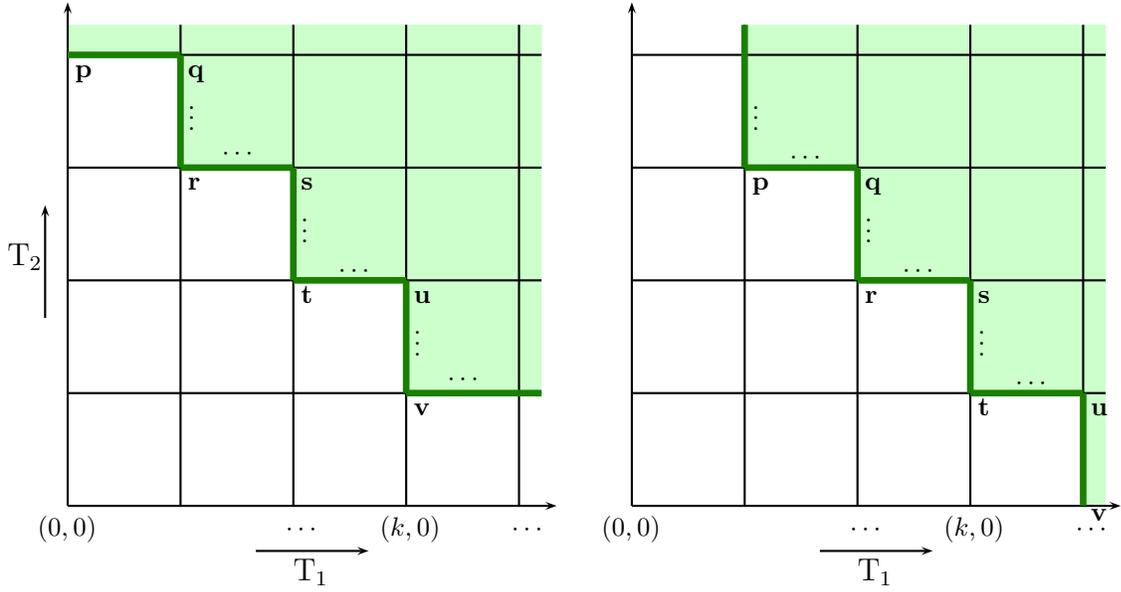
\begin{figure}[th]
\begin{center}
\begin{picture}(150,80)

\rput(0,26){

\pspolygon*[linecolor=lightgreen](5,84)(5,80)(68,80)(68,84)(5,84)
\pspolygon*[linecolor=lightgreen](20,84)(20,65)(68,65)(68,84)(20,84)
\pspolygon*[linecolor=lightgreen](35,84)(35,50)(68,50)(68,84)(35,84)
\pspolygon*[linecolor=lightgreen](50,84)(50,35)(68,35)(68,84)(50,84)

\psline{->}(5,20)(70,20)
\psline(5,35)(68,35)
\psline(5,50)(68,50)
\psline(5,65)(68,65)
\psline(5,80)(68,80)

\psline{->}(5,20)(5,87)
\psline(20,20)(20,84)
\psline(35,20)(35,84)
\psline(50,20)(50,84)
\psline(65,20)(65,84)

\psline[linecolor=darkgreen, linewidth=2.5pt](5,80)(20,80)
\psline[linecolor=darkgreen, linewidth=2.5pt](20,80)(20,65)
\psline[linecolor=darkgreen, linewidth=2.5pt](20,65)(35,65)
\psline[linecolor=darkgreen, linewidth=2.5pt](35,65)(35,50)
\psline[linecolor=darkgreen, linewidth=2.5pt](35,50)(50,50)
\psline[linecolor=darkgreen, linewidth=2.5pt](50,50)(50,35)
\psline[linecolor=darkgreen, linewidth=2.5pt](50,35)(68,35)

\put(1,16){\footnotesize{$(0,0)$}}
\put(34,16){\footnotesize{$\cdots$}}
\put(46.5,16){\footnotesize{$(k,0)$}}
\put(64,16){\footnotesize{$\cdots$}}

\put(6,77){\footnotesize{$\mathbf{p}$}}
\put(21,77){\footnotesize{$\mathbf{q}$}}
\put(21,62){\footnotesize{$\mathbf{r}$}}
\put(36,62){\footnotesize{$\mathbf{s}$}}
\put(36,47){\footnotesize{$\mathbf{t}$}}
\put(51,47){\footnotesize{$\mathbf{u}$}}
\put(51,32){\footnotesize{$\mathbf{v}$}}

\put(36,55){\footnotesize{$\vdots$}}
\put(21,70){\footnotesize{$\vdots$}}
\put(51,40){\footnotesize{$\vdots$}}
\put(41,50.5){\footnotesize{$\cdots$}}
\put(55.5,36){\footnotesize{$\cdots$}}
\put(25.5,66){\footnotesize{$\cdots$}}

\psline{->}(30,14)(45,14)
\put(35,10){$\rm{T}_1$}
\psline{->}(2,45)(2,60)
\put(-3,52){$\rm{T}_2$}




\psline{->}(105,14)(120,14)
\put(110,10){$\rm{T}_1$}

\pspolygon*[linecolor=lightgreen](95,84)(95,65)(143,65)(143,84)(95,84)
\pspolygon*[linecolor=lightgreen](110,84)(110,50)(143,50)(143,84)(110,84)
\pspolygon*[linecolor=lightgreen](125,84)(125,35)(143,35)(143,84)(125,84)
\pspolygon*[linecolor=lightgreen](140,35)(140,20)(143,20)(143,35)(140,35)

\psline{->}(80,20)(145,20)
\psline(80,35)(143,35)
\psline(80,50)(143,50)
\psline(80,65)(143,65)
\psline(80,80)(143,80)

\psline{->}(80,20)(80,87)
\psline(95,20)(95,84)
\psline(110,20)(110,84)
\psline(125,20)(125,84)
\psline(140,20)(140,84)

\psline[linecolor=darkgreen, linewidth=2.5pt](95,84)(95,65)
\psline[linecolor=darkgreen, linewidth=2.5pt](95,65)(110,65)
\psline[linecolor=darkgreen, linewidth=2.5pt](110,65)(110,50)
\psline[linecolor=darkgreen, linewidth=2.5pt](110,50)(125,50)
\psline[linecolor=darkgreen, linewidth=2.5pt](125,50)(125,35)
\psline[linecolor=darkgreen, linewidth=2.5pt](125,35)(140,35)
\psline[linecolor=darkgreen, linewidth=2.5pt](140,35)(140,20)

\put(76,16){\footnotesize{$(0,0)$}}
\put(109,16){\footnotesize{$\cdots$}}
\put(121.5,16){\footnotesize{$(k,0)$}}
\put(139,16){\footnotesize{$\cdots$}}

\put(96,62){\footnotesize{$\mathbf{p}$}}
\put(111,62){\footnotesize{$\mathbf{q}$}}
\put(111,47){\footnotesize{$\mathbf{r}$}}
\put(126,47){\footnotesize{$\mathbf{s}$}}
\put(126,32){\footnotesize{$\mathbf{t}$}}
\put(141,32){\footnotesize{$\mathbf{u}$}}
\put(141,18){\footnotesize{$\mathbf{v}$}}
\put(111,55){\footnotesize{$\vdots$}}
\put(116,50.5){\footnotesize{$\cdots$}}
\put(101,65.5){\footnotesize{$\cdots$}}
\put(131,35.5){\footnotesize{$\cdots$}}
\put(126,40){\footnotesize{$\vdots$}}
\put(96,70){\footnotesize{$\vdots$}}

}

\end{picture}
\end{center}
\caption{Weight diagrams of the $2$-variable weighted shifts for Type I (left) and Type II (right).}
\label{Figure 4}
\end{figure}

\setlength{\unitlength}{1mm} \psset{unit=1mm}
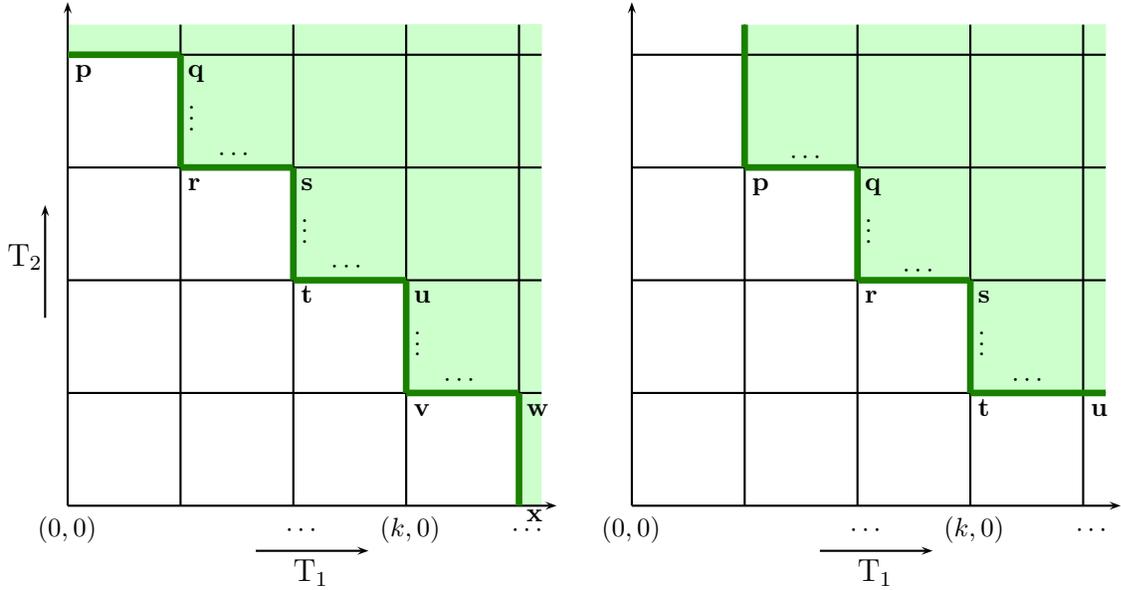
\begin{figure}[th]
\begin{center}
\begin{picture}(150,80)

\rput(0,26){

\pspolygon*[linecolor=lightgreen](5,84)(5,80)(68,80)(68,84)(5,84)
\pspolygon*[linecolor=lightgreen](20,84)(20,65)(68,65)(68,84)(20,84)
\pspolygon*[linecolor=lightgreen](35,84)(35,50)(68,50)(68,84)(35,84)
\pspolygon*[linecolor=lightgreen](50,84)(50,35)(68,35)(68,84)(50,84)
\pspolygon*[linecolor=lightgreen](65,84)(65,20)(68,20)(68,84)(65,84)

\psline{->}(5,20)(70,20)
\psline(5,35)(68,35)
\psline(5,50)(68,50)
\psline(5,65)(68,65)
\psline(5,80)(68,80)

\psline{->}(5,20)(5,87)
\psline(20,20)(20,84)
\psline(35,20)(35,84)
\psline(50,20)(50,84)
\psline(65,20)(65,84)

\psline[linecolor=darkgreen, linewidth=2.5pt](5,80)(20,80)
\psline[linecolor=darkgreen, linewidth=2.5pt](20,80)(20,65)
\psline[linecolor=darkgreen, linewidth=2.5pt](20,65)(35,65)
\psline[linecolor=darkgreen, linewidth=2.5pt](35,65)(35,50)
\psline[linecolor=darkgreen, linewidth=2.5pt](35,50)(50,50)
\psline[linecolor=darkgreen, linewidth=2.5pt](50,50)(50,35)
\psline[linecolor=darkgreen, linewidth=2.5pt](50,35)(65,35)
\psline[linecolor=darkgreen, linewidth=2.5pt](65,35)(65,20)

\put(1,16){\footnotesize{$(0,0)$}}
\put(34,16){\footnotesize{$\cdots$}}
\put(46.5,16){\footnotesize{$(k,0)$}}
\put(64,16){\footnotesize{$\cdots$}}

\put(6,77){\footnotesize{$\mathbf{p}$}}
\put(21,77){\footnotesize{$\mathbf{q}$}}
\put(21,62){\footnotesize{$\mathbf{r}$}}
\put(36,62){\footnotesize{$\mathbf{s}$}}
\put(36,47){\footnotesize{$\mathbf{t}$}}
\put(51,47){\footnotesize{$\mathbf{u}$}}
\put(51,32){\footnotesize{$\mathbf{v}$}}
\put(66,32){\footnotesize{$\mathbf{w}$}}
\put(66,18){\footnotesize{$\mathbf{x}$}}

\put(21,70){\footnotesize{$\vdots$}}
\put(36,55){\footnotesize{$\vdots$}}
\put(51,40){\footnotesize{$\vdots$}}
\put(55,36){\footnotesize{$\cdots$}}
\put(40,51){\footnotesize{$\cdots$}}
\put(25,66){\footnotesize{$\cdots$}}

\psline{->}(30,14)(45,14)
\put(35,10){$\rm{T}_1$}
\psline{->}(2,45)(2,60)
\put(-3,52){$\rm{T}_2$}




\psline{->}(105,14)(120,14)
\put(110,10){$\rm{T}_1$}

\pspolygon*[linecolor=lightgreen](95,84)(95,65)(143,65)(143,84)(95,84)
\pspolygon*[linecolor=lightgreen](110,84)(110,50)(143,50)(143,84)(110,84)
\pspolygon*[linecolor=lightgreen](125,84)(125,35)(143,35)(143,84)(125,84)

\psline{->}(80,20)(145,20)
\psline(80,35)(143,35)
\psline(80,50)(143,50)
\psline(80,65)(143,65)
\psline(80,80)(143,80)

\psline{->}(80,20)(80,87)
\psline(95,20)(95,84)
\psline(110,20)(110,84)
\psline(125,20)(125,84)
\psline(140,20)(140,84)

\psline[linecolor=darkgreen, linewidth=2.5pt](95,84)(95,65)
\psline[linecolor=darkgreen, linewidth=2.5pt](95,65)(110,65)
\psline[linecolor=darkgreen, linewidth=2.5pt](110,65)(110,50)
\psline[linecolor=darkgreen, linewidth=2.5pt](110,50)(125,50)
\psline[linecolor=darkgreen, linewidth=2.5pt](125,50)(125,35)
\psline[linecolor=darkgreen, linewidth=2.5pt](125,35)(143,35)

\put(76,16){\footnotesize{$(0,0)$}}
\put(109,16){\footnotesize{$\cdots$}}
\put(121.5,16){\footnotesize{$(k,0)$}}
\put(139,16){\footnotesize{$\cdots$}}

\put(96,62){\footnotesize{$\mathbf{p}$}}
\put(111,62){\footnotesize{$\mathbf{q}$}}
\put(111,47){\footnotesize{$\mathbf{r}$}}
\put(126,47){\footnotesize{$\mathbf{s}$}}
\put(126,32){\footnotesize{$\mathbf{t}$}}
\put(141,32){\footnotesize{$\mathbf{u}$}}
\put(111,55){\footnotesize{$\vdots$}}
\put(126,40){\footnotesize{$\vdots$}}
\put(116,50.5){\footnotesize{$\cdots$}}
\put(101,65.5){\footnotesize{$\cdots$}}
\put(130.5,36){\footnotesize{$\cdots$}}

}

\end{picture}
\end{center}
\caption{Weight diagrams of the $2$-variable weighted shifts for Type III (left) and Type IV (right).}
\label{Figure 5}
\end{figure}

\medskip
{\bf Type I}: \ $\stc(P) \bigcap (0 \times \mathbb{Z}_+) \ne \emptyset$ and $\stc(P) \bigcap (\mathbb{Z}_+ \times 0) = \emptyset$ . \smallskip \newline 
We refer the reader to the left staircase in Figure \ref{Figure 4}, and recall that we denote arbitrary points in $\Z$ as $\bk \equiv (k_1,k_2)$. \ Observe that the points $\mathbf{p}, \mathbf{q}, \mathbf{r}$ determine a weight diagram as in the generalized one-step extension case of ROMP (Theorem \ref{thmgeneralized}). \ As a result, assuming that the natural necessary conditions for solubility are satisfied, we can extend $\nu_{\mathbf{pr}}$ to the subspace $\mathcal{L}_{(p_1,r_2)}$. \ We therefore reduce the original ROMP to a new ROMP whose descending staircase starts at $(p_1,r_2)$, and continues with the points $\mathbf{s}$, $\mathbf{t}$, $\mathbf{u}$ and $\mathbf{v}$. \ The situation is then almost identical to what we had before, so with the natural necessary conditions for solubility, and using Theorem \ref{thmgeneralized} once again, we extend the measure to the subspace $\mathcal{L}_{(p_1,t_2)}$. \ This yields a new descending staircase, connecting $(p_1,t_2)$ to $\mathbf{u}$ and $\mathbf{v}$. \ Another application of Theorem \ref{thmgeneralized} leads to an extension to the subspace $\mathcal{L}_{(p_1,v_2)}$. \ To finish, we now use the back-step extension result (Lemma \ref{backext}) applied $p_2$ times.

\medskip
{\bf Type II}: \ $\stc(P) \bigcap (0 \times \mathbb{Z}_+) = \emptyset$ and $\stc(P) \bigcap (\mathbb{Z}_+ \times 0) \ne \emptyset$ . \smallskip \newline
This case is completely analogous to the previous one, so after successive instances of Theorem \ref{thmgeneralized} we end up with the subspace $\mathcal{L}_{(p_1,0)}$, and we then apply Lemma \ref{backext}, $p_1$ times, to obtain the Berger measure $\mu$ of $\W$.

\medskip
{\bf Type III}: \ $\stc(P) \bigcap (0 \times \mathbb{Z}_+) \ne \emptyset$ and $\stc(P) \bigcap (\mathbb{Z}_+ \times 0) \ne \emptyset$ . \smallskip \newline
Here repeated application of Theorem \ref{thmgeneralized} does the job.

\medskip
{\bf Type IV}: \ $\stc(P) \bigcap (0 \times \mathbb{Z}_+) = \emptyset$ and $\stc(P) \bigcap (\mathbb{Z}_+ \times 0) = \emptyset$ . \medskip\ \newline
In this case, and after a few instances of Theorem \ref{thmgeneralized}, we end up with the subspace $\mathcal{L}_{(p_1,t_2)}$. \ This case fits well within the scope of Theorem \ref{bigthm}. \ As a result, the natural necessary conditions are sufficient for the existence of the Berger measure $\mu$.

The proof is now complete.
\end{proof}


\end{document}